\numberwithin{equation}{section}
\theoremstyle{plain}
\newtheorem{lem}{Lemma}
\newtheorem{prop}{Proposition}
\newtheorem{theorem}{Theorem}
\newtheorem{rem}{Remark}
\def\tr{{\textrm tr}}
\begin{document}

\begin{frontmatter}
\title{On the theoretic and practical merits of the banding estimator for large covariance matrices}
\runtitle{The Banding Estimator }

\begin{aug}
\author{\fnms{Luo} \snm{Xiao}\ead[label=e1]{lxiao@jhsph.edu}}
\address{Department of Biostatistics\\
Bloomberg School of Public Health\\
Johns Hopkins University\\
Baltimore, Maryland\\
\printead{e1}}

\and
\author{\fnms{Florentina} \snm{Bunea}\ead[label=e2]{fb238@cornell.edu}}
\address{Department of Statistical Science\\
Cornell University\\
Ithaca, New York\\
\printead{e2}}
\runauthor{L. Xiao and F. Bunea}

\end{aug}

\begin{abstract}
This paper considers the banding estimator  proposed in
\citep{Bickel:08b} for  estimation of large  covariance matrices. We
prove that the banding estimator achieves rate-optimality under the
operator norm, for a class of approximately banded covariance
matrices, improving the existing results in \cite{Bickel:08b}. In
addition, we propose a Stein's unbiased risk estimate
(Sure)-type approach for selecting the bandwidth for the banding estimator. Simulations indicate that the Sure-tuned banding estimator outperforms competing estimators. 
\end{abstract}

\begin{keyword}
\kwd{Operator norm optimality}
\end{keyword}

\end{frontmatter}

\section{Introduction}
High dimensional covariance estimation has attracted a  lot of attention in recent years. This was largely motivated by the fact that the sample covariance 
matrix $\hat{\Sigma}$, based on a sample of size $n$, may  not necessarily  be a consistent estimator of the covariance matrix $\Sigma$ of a random vector $X\in \mathbb{R}^p$, if $p>n$. In particular, it is well known that in spike covariance models the eigenvalues of the sample covariance are inconsistent estimators of their population counterparts \cite{ Baik:06, Johnstone:01}.
For high dimensional population covariance matrices with low
dimensional structures, consistent estimators can be obtained,
depending on the nature of the low dimensional structure, by banding
\cite{Bickel:08b}, tapering \cite{Bickel:08b,Cai:10, Cai:12, Furrer:07,  Yi:13}, 
and thresholding \cite{Bickel:08a, Cai:11,  Karoui:08}. Moreover, some sparse estimators ensure positive
definiteness through the choice of objective function \cite{Bien:11, Rothman:12} or by the addition of an explicit constraint
on the smallest eigenvalue \cite{Bien:11,  Liu:13, Xue:12}. 
Cholesky-decomposition based regularization has also been intensively studied
 \cite{Huang:06, Lam:09, Rothman:08, Wu:03}. Besides estimation, various tests have been proposed for examining the postulated low complexity structure.  Since our work focuses on  estimation of approximately banded matrices, we only mention tests relevant to such structures, developed, among others,  by  \cite{Cai:11b,  Ledoit:02, Chen:10, Jiang:04, Liu:08,  Qiu:12, Zhang:13}.

In this paper we re-visit the banding estimator in \cite{Bickel:08b} and address the following important open question: Does  the banding estimator achieve the operator norm optimal rate derived in \cite{Cai:10} over the following class of covariance matrices introduced by \cite{Bickel:08b}?
\begin{equation*}
\begin{split}
\mathcal{C}_{\alpha} &= \mathcal{C}_{\alpha}(M_0,M_1)\\
&:= \biggl \{\Sigma = (\sigma_{ij})_{1\leq i, j\leq p}: \max_j \sum_{|i-j|\geq k} |\sigma_{ij}| \leq M_1k^{-\alpha} \text{ for all } k>0,\\
&\qquad\quad \text{ and } 0 <M_0^{-1}\leq \lambda_{\min}(\Sigma), \lambda_{\max}(\Sigma) \leq M_0\biggr \}.
\end{split}
\end{equation*}
The class $\mathcal{C}_{\alpha}$ will be referred to as the class of approximately banded covariance matrices. 

Assume $X_k = (X_{k,1},\dots, X_{k,p})^T, k = 1,\dots, n$,  are i.i.d.\ realizations of $X\sim \text{MVN}(\mu,\Sigma)$ with $\Sigma \in \mathcal{C}_{\alpha}$. Let $\hat\Sigma = (\hat\sigma_{ij})_{1\leq i, j\leq p}$ be the sample covariance matrix, i.e., $\hat\sigma_{ij} = (n-1)^{-1}\sum_k (X_{k,i} - \bar X_i)(X_{k,j} - \bar X_j)$,
where $\bar X_i = n^{-1}\sum_k X_{k,i}$. The banding estimator is defined as
\begin{equation}
\label{banding}
\hat\Sigma_K = (\hat\sigma_{ij} 1_{|i-j|\leq K-1})_{1\leq i, j\leq p}
\end{equation}
and $K$ is referred to as the bandwidth of $\hat \Sigma_K$.

It is shown in \cite{Cai:10}  that  the banding estimator  is rate
optimal under the Frobenius norm and that the operator-norm rate
derived in \cite{Bickel:08b} is sub-optimal. However,  it remains
unclear whether or not the banding estimator can be rate-optimal under
the operator norm. To date, two operator-norm minimax rate-optimal
estimators have been proposed: the tapering estimator \cite{Cai:10}
and a block-thresholding estimator \cite{Cai:12b}, the latter also
being minimax adaptive.  \cite{Yi:13} proposed a Stein's unbiased risk
estimation (Sure)-type approach for selecting the bandwidth of the
tapering estimator, but the resulting estimator is aimed at minimizing the Frobenius risk instead of the operator-norm risk.  The block-thresholding estimator, while minimax adaptive, is found in our simulations to have inferior finite-sample performance compared to other estimators.

The discussion above motivates the work presented in this paper.  First,  we provide a proof for establishing the rate optimality of  the banding estimator under the operator norm, thus improving the rate in \cite{Bickel:08b} and filling the existing theoretic gap. Second,  we provide a practical approach for selecting the bandwidth for the banding estimator  by a novel approach inspired by the Stein's unbiased risk estimate (Sure)  \citep{Stein:81}. We demonstrate in simulations that the resulting banding estimator outperforms other competing estimators.

The remainder of the paper is organized as follows. In Section \ref{sec:rate} we state our main theoretic result. In Section \ref{sec:sure} we consider bandwidth selection. In Section \ref{sec:sim} we conduct simulations to compare the proposed estimator with other competing estimators. In Section \ref{sec:derivation} we provide a detailed proof of  the result in Section \ref{sec:rate}.

\section{The banding estimator is operator-norm rate optimal}\label{sec:rate}
In this section we show that the banding estimator $\hat\Sigma_K$
defined in~(\ref{banding}) is operator-norm rate optimal over
$\mathcal{C}_{\alpha}$. We use the following notation: Let $a \lesssim
b$ denote an inequality that holds up to a multiplicative constant;
let $a_n \asymp b_n$ denote that there exist two constants $c$ and $C$
such that $c a_n \leq b_n\leq C a_n$ for large $n$; finally, for an arbitrary matrix $A= (a_{ij})_{ij}$, define $A^{abs} = (|a_{ij}|)_{ij}$.  For $\Sigma\in \mathcal{C}_{\alpha}$, it is easy to show that
$\|\Sigma^{abs}\|_{op}$ is bounded by $M_0 + M_1$.

\begin{theorem}
\label{thm}
For the banding estimator $\hat\Sigma_K$ with $\Sigma\in \mathcal{C}_{\alpha}$, 
there exists a constant $c>0$ such that
$$
\mathbb{P}\left\{ \|\hat\Sigma_K - \Sigma\|_{op} \geq  c K^{-\alpha} + c \|\Sigma^{abs}\|_{op}\sqrt{\frac{K + \log p}{n}}\right\}\lesssim p^{-1}, \,\, \text{ for any } K\geq 1.
$$
Furthermore,
$$
\mathbb{E} \|\hat \Sigma_K - \Sigma\|_{op}^2 \lesssim K^{-2\alpha} + \frac{K +\log p}{n}, \,\, \text{ for any } K\geq 1.
$$
\end{theorem}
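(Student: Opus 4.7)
The plan is to split $\hat\Sigma_K - \Sigma$ into a deterministic bias part and a stochastic variance part. Writing $B_K(\Sigma) := (\sigma_{ij}\, 1_{|i-j|\leq K-1})_{ij}$ for the banded population covariance, one has $\hat\Sigma_K - \Sigma = (\hat\Sigma_K - B_K(\Sigma)) + (B_K(\Sigma) - \Sigma)$. The bias part is immediate from the definition of $\mathcal{C}_\alpha$: since $B_K(\Sigma) - \Sigma$ is symmetric with $i$th absolute row sum $\sum_{j:|i-j|\geq K}|\sigma_{ij}| \leq M_1 K^{-\alpha}$, the operator norm is bounded by the max row sum, giving $\|B_K(\Sigma) - \Sigma\|_{op} \leq M_1 K^{-\alpha}$. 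This accounts for the first term of the tail bound.

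The variance part $V := \hat\Sigma_K - B_K(\Sigma)$ is a symmetric matrix of bandwidth at most $K-1$. Rather than take the wasteful elementwise bound $\|V\|_{op} \leq K\max_{ij}|\hat\sigma_{ij}-\sigma_{ij}|$ that gives the suboptimal $K\sqrt{\log p/n}$ rate of \cite{Bickel:08b}, I would exploit the band structure through a block-diagonal decomposition of $V$. Partition $\{1,\ldots,p\}$ into consecutive length-$K$ intervals $I_l$ and form the length-$2K$ overlapping windows $\tilde J_l = I_l \cup I_{l+1}$. Let $V_{\text{odd}}$ (respectively $V_{\text{even}}$) be the block-diagonal matrix extracting the principal submatrices $V[\tilde J_l,\tilde J_l]$ over the disjoint odd-indexed (respectively even-indexed) windows. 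A short combinatorial check (entries with $i,j$ in a common $I_l$ are counted twice in $V_{\text{odd}}+V_{\text{even}}$, while entries straddling adjacent $I_l,I_{l+1}$ are counted once) yields
$$V = V_{\text{odd}} + V_{\text{even}} - V_{\text{diag}},$$
where $V_{\text{diag}}$ is block-diagonal with the $K\times K$ blocks $V[I_l,I_l]$. Since each summand is block-diagonal, its operator norm equals the maximum of its block operator norms, so by the triangle inequality $\|V\|_{op} \leq 3 \max_{\tilde I}\|V[\tilde I,\tilde I]\|_{op}$, where $\tilde I$ ranges over $O(p/K)$ principal submatrices of size at most $2K$.

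The remaining ingredient is a concentration inequality for the operator norm of a Gaussian sample covariance on a fixed submatrix. For $|\tilde I|\leq 2K$, an application of Davidson--Szarek after whitening by $\Sigma[\tilde I,\tilde I]^{1/2}$, combined with $\|\Sigma[\tilde I,\tilde I]\|_{op} \leq \|\Sigma^{abs}\|_{op}$, yields a bound of the form
$$\mathbb{P}\Bigl(\|V[\tilde I,\tilde I]\|_{op} \geq c\|\Sigma^{abs}\|_{op}\bigl(\sqrt{(K+t)/n} + (K+t)/n\bigr)\Bigr) \leq e^{-t}.$$
Setting $t\asymp\log p$ and applying a union bound over the $O(p/K)$ blocks gives $\|V\|_{op} \lesssim \|\Sigma^{abs}\|_{op}\sqrt{(K+\log p)/n}$ with probability at least $1 - p^{-1}$; combined with the bias bound, this proves the tail statement.

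The main technical obstacle is the block concentration inequality at the correct scale uniformly over submatrices, with the additional subtlety that $V[\tilde I,\tilde I]$ is the band-truncated rather than the full principal submatrix of $\hat\Sigma - \Sigma$; handling the truncated corner entries may require an additional inner decomposition of the $2K\times 2K$ block into its two $K\times K$ halves together with the triangularly masked off-diagonal piece. For the expectation bound, I would integrate the tail: on the high-probability event it produces the stated rate $K^{-2\alpha} + (K+\log p)/n$, while on the complementary event of probability $p^{-1}$ I would use the crude deterministic estimate $\|\hat\Sigma_K - \Sigma\|_{op}^2 \lesssim K^2 \max_{ij}|\hat\sigma_{ij}|^2 + M_0^2$, Cauchy--Schwarz, and the polylogarithmic moment bound $\mathbb{E}\max_{ij}|\hat\sigma_{ij}|^q \lesssim (\log p)^{q/2}$ for Gaussian entries, yielding a residual that is easily absorbed into the main rate.
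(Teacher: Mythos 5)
Your overall architecture matches the paper's: the bias is handled identically via $\|\Sigma-\Sigma_K\|_{op}\leq\|\Sigma-\Sigma_K\|_{1,1}\lesssim K^{-\alpha}$, and your three-term block-diagonal decomposition of the variance part (odd windows plus even windows minus the diagonal correction) is equivalent in effect to the paper's norm-compression step, which also reduces $\|\hat\Sigma_K-\Sigma_K\|_{op}$ to $3$ times a maximum over $O(p/K)$ blocks of size $O(K)$, to be controlled at scale $\sqrt{(K+\log p)/n}$ by a union bound. The difference is that the paper supplies the block-level concentration inequality and your proposal does not.

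The gap is exactly the point you flag as a ``subtlety'' and then defer. Because $V=\hat\Sigma_K-B_K(\Sigma)$ carries the banding mask, the window submatrix $V[\tilde J_l,\tilde J_l]$ is \emph{not} a principal submatrix of $\hat\Sigma-\Sigma$: its corners (entries with $|i-j|\geq K$ inside the $2K\times 2K$ window) are forced to zero, so it equals $(\hat\Sigma-\Sigma)[\tilde J_l,\tilde J_l]$ Schur-multiplied by a $0$--$1$ mask. Davidson--Szarek after whitening controls $\|(\hat\Sigma-\Sigma)[\tilde I,\tilde I]\|_{op}$ for a full principal submatrix, but Schur multiplication by a $0$--$1$ mask is not an operator-norm contraction; for the triangular mask the Schur multiplier norm grows like $\log K$, so passing from the masked block to the full block costs a logarithmic factor that destroys the exact optimal rate. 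Your fallback --- split the window into its two full $K\times K$ diagonal halves plus the triangularly masked off-diagonal piece --- reduces precisely to the problem the paper identifies as the crux: operator-norm concentration for $\{\hat\Sigma(k,\ell)-\Sigma(k,\ell)\}*H_0$ with $H_0$ strictly triangular, which no off-the-shelf random-matrix bound covers. The paper resolves it with an $\varepsilon$-net over $u,v\in\mathbb{S}^{K-1}$ combined with a new concentration inequality for Gaussian bilinear forms $X^TAY$ (Propositions~\ref{prop_max} and~\ref{prop2}, via the moment generating function in Lemma~\ref{lem5} and the uniform variance-proxy bound $\tr(B^2)\lesssim\|\Sigma^{abs}\|_{op}^2$ of Lemma~\ref{lem6}); this is the paper's main technical contribution and is missing from your argument. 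Separately, your treatment of the expectation bound on the exceptional event does not close as written: Cauchy--Schwarz with a tail probability of only $p^{-1}$ leaves a residual of order $K^2 p^{-1/2}$, which is not always absorbed by $(K+\log p)/n$; you would need to strengthen the tail exponent or, as the paper does, integrate the Gaussian tail of the block maximum directly.
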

\begin{rem}
If $K \asymp n^{1/(2\alpha+1)}$, then
$$
\mathbb{E} \|\hat \Sigma_K - \Sigma\|_{op}^2 \lesssim n^{-2\alpha/(2\alpha+1)} + \frac{\log p}{n},
$$
which is the optimal rate  over $\mathcal{C}_{\alpha}$ under the operator norm \citep{Cai:10}.

\end{rem}

We explain below the difference between the derivation in \cite{Bickel:08b}, which leads to a sub-optimal upper bound over $\mathcal{C}_{\alpha}$ of $\|\hat \Sigma_K - \Sigma\|_{op}$, and our contribution. We begin by taking a closer look at the arguments used in \cite{Bickel:08b}. We shall use the fact that with high probability, $\max_{i,j} |\hat\sigma_{ij}-\sigma_{ij}| \lesssim \sqrt{\log p/n}$; see equality (12) of  \cite{Bickel:08a}.
The following inequality will also be used multiple times: for any symmetric real matrix $A$,
\begin{equation}
\label{11norm}
\|A\|_{op}\leq \|A\|_{1,1},
\end{equation}
where $\|A\|_{1,1} = \max_{i} \sum_j |a_{ij}|$.

The derivation in \cite{Bickel:08b} is essentially as follows. By the triangle inequality,
\begin{align*}
\|\hat \Sigma_K -\Sigma\|_{op} \leq \|\hat \Sigma_K- \Sigma_K\|_{op} + \|\Sigma-\Sigma_K\|_{op},
\end{align*}
where $\Sigma_K = (\sigma_{ij}1_{|i-j|\leq K-1})_{1\leq i, j\leq p}$.
For the term $ \|\Sigma-\Sigma_K\|_{op}$ with $\Sigma\in\mathcal{C}_{\alpha}$,  we have

\begin{align}
\|\Sigma-\Sigma_K\|_{op} \leq&\|\Sigma-\Sigma_K\|_{1,1}\label{11norm1}\\
 =&\max_i \sum_{|i-j|\geq K} |\sigma_{ij}|\nonumber\\
 \lesssim &K^{-\alpha}.\nonumber
\end{align}
Then,
\begin{align}
\| \hat \Sigma_K- \Sigma_K\|_{op}\leq& \|\hat \Sigma_K- \Sigma_K\|_{1,1}\label{11norm2}\\
=&\max_i \sum_{|i-j|\leq K-1} |\hat\sigma_{ij}-\sigma_{ij}|\nonumber\\
\leq&(2K-1)\max_{ij}  |\hat\sigma_{ij}-\sigma_{ij}|\nonumber\\
\lesssim& K\sqrt{\log p/n}\nonumber
\end{align}
with high probability. Therefore,
$$
\|\hat \Sigma_K -\Sigma\|_{op} \lesssim K\sqrt{\log p/n} + K^{-\alpha}, \text{ for any } K,
$$
with high probability.
By choosing $K\asymp (\log p/n)^{-\frac{1}{2(\alpha+1)}}$, one obtains that
$$
\|\hat \Sigma_K -\Sigma\|_{op} \lesssim \left(\frac{\log p}{n}\right)^{\frac{\alpha}{2(\alpha+1)}}
$$
with high probability, which is sub-optimal.

It is easy to see that the inequality~(\ref{11norm1}) is tight over $\mathcal{C}_{\alpha}$  and cannot be improved. However,
the inequality~(\ref{11norm2}) is not tight. We show in Proposition~\ref{prop_band} that  inequality~(\ref{11norm}) can be
 reduced by an important $\sqrt{K}$ factor. With this improvement and by choosing  $K \asymp n^{-1/(2\alpha+1)}$, we can show that 
 indeed, 
$$
\|\hat \Sigma_K -\Sigma\|_{op} \lesssim n^{-\frac{\alpha}{2\alpha+1}} + \sqrt{\log p/n}
$$
with high probability, which is the optimal rate given in Theorem~\ref{thm}. Moreover, the bound in expectation of Theorem~\ref{thm} can be similarly derived from Proposition~\ref{prop_band}.

\begin{prop}
\label{prop_band}
For the banding estimator $\hat\Sigma_K$ with $\Sigma\in \mathcal{C}_{\alpha}$, 
there exists a constant $c>0$ such that
$$
\mathbb{P}\left\{ \|\hat\Sigma_K - \Sigma_K\|_{op} \geq   c \|\Sigma^{abs}\|_{op}\sqrt{\frac{K + \log p}{n}}\right\}\lesssim p^{-1}.
$$
Furthermore,
$$
\mathbb{E} \|\hat \Sigma_K - \Sigma_K\|_{op}^2 \lesssim \frac{K +\log p}{n}.
$$
\end{prop}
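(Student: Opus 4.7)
The strategy I would follow is a block-partition argument that produces the critical $\sqrt{K}$ improvement over the $\|\cdot\|_{1,1}$ bound used in \cite{Bickel:08b}. Partition $\{1,\ldots,p\}$ into consecutive blocks $B_1, B_2, \ldots$ of length $K$ (the last block possibly shorter), and let $P_m$ denote the orthogonal projector onto $B_m$. Because $E := \hat\Sigma_K - \Sigma_K$ has bandwidth at most $K-1$, it is block-tridiagonal under this partition, so it splits as $E = D + U + U^T$, where $D$ is block-diagonal with blocks $D_m := P_m E P_m = P_m(\hat\Sigma-\Sigma)P_m$ (the banding mask is vacuous inside a single block) and $U$ is strictly block-upper-bidiagonal with blocks $P_m E P_{m+1}$. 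Then $\|E\|_{op} \leq \|D\|_{op} + 2\|U\|_{op}$; since $UU^T$ is block-diagonal, $\|U\|_{op} = \max_m \|P_m E P_{m+1}\|_{op}$, and similarly $\|D\|_{op} = \max_m \|D_m\|_{op}$. It therefore suffices to control each of the at most $2\lceil p/K\rceil$ block operator norms by $c\,\|\Sigma^{abs}\|_{op}\sqrt{(K+\log p)/n}$ with failure probability $\lesssim p^{-2}$ apiece, and then union-bound.

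For the diagonal blocks, each $D_m$ is the sample-minus-population covariance of the $K$ Gaussian coordinates indexed by $B_m$. A standard Hanson--Wright bound combined with a $(1/4)$-net of cardinality $\leq 9^K$ on the $K$-dimensional unit sphere gives $\|D_m\|_{op} \lesssim \|\Sigma\|_{op}\sqrt{(K+t)/n}$ with probability $\geq 1 - 2e^{-t}$; taking $t \asymp \log p$ and bounding $\|\Sigma\|_{op} \leq \|\Sigma^{abs}\|_{op}$ yields the required estimate on $\|D\|_{op}$.

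The off-diagonal blocks form the delicate step, because each $P_m E P_{m+1}$ equals $P_m(\hat\Sigma-\Sigma)P_{m+1}$ Hadamard-multiplied by the banding mask $M_{ij} := 1_{|i-j| \leq K-1}$, and operator norm is not monotone under general masking. I would therefore analyse the bilinear form directly: for fixed unit vectors $u$ supported on $B_m$ and $v$ supported on $B_{m+1}$, and with $Y_k$ denoting the centred data vectors,
$$u^T(P_m E P_{m+1})v \;=\; \frac{1}{n-1}\sum_{k=1}^n \bigl(Y_k^T W Y_k - \mathbb{E}[Y_k^T W Y_k]\bigr), \qquad W := (uv^T) \circ M.$$
Hanson--Wright then yields a sub-exponential tail governed by $\|\Sigma^{1/2}\tfrac{W+W^T}{2}\Sigma^{1/2}\|_F$ and $\|\Sigma^{1/2}\tfrac{W+W^T}{2}\Sigma^{1/2}\|_{op}$. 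The key observation is the Frobenius bound
$$\|W\|_F^2 \;=\; \sum_{i\in B_m,\, j\in B_{m+1},\, M_{ij}=1} u_i^2\, v_j^2 \;\leq\; \|u\|^2\|v\|^2 \;=\; 1,$$
which implies $\|W\|_{op} \leq \|W\|_F \leq 1$ and hence that both Hanson--Wright parameters are at most $\|\Sigma\|_{op} \leq \|\Sigma^{abs}\|_{op}$. A $(1/4)$-net of cardinality $\leq 9^{2K}$ on the two $K$-dimensional unit spheres then lifts the pointwise tail to $\|P_m E P_{m+1}\|_{op} \lesssim \|\Sigma^{abs}\|_{op}\sqrt{(K+\log p)/n}$ with the desired probability, and a union bound over $m$ closes the high-probability statement.

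The main obstacle is exactly this off-diagonal step: the naive idea of bounding $\|P_m E P_{m+1}\|_{op}$ by the operator norm of the $2K\times 2K$ principal submatrix of $\hat\Sigma-\Sigma$ on $B_m \cup B_{m+1}$ fails because the banding mask is not operator-norm monotone, which is why the quadratic-form representation together with the Frobenius bound $\|W\|_F \leq 1$ is essential. Centering by $\bar X$ rather than by the true mean contributes only lower-order terms (for Gaussian data one may pass to $n-1$ rotated i.i.d.\ $N(0,\Sigma)$ vectors at the cost of constants). The expectation bound then follows by integrating the Bernstein tail against $2t\,dt$ and splitting at the transition between its sub-Gaussian and sub-exponential regimes, producing $\mathbb{E}\|E\|_{op}^2 \lesssim (K+\log p)/n$ as claimed.
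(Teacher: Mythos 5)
Your proposal is correct and its skeleton matches the paper's: reduce $\hat\Sigma_K-\Sigma_K$ to its block-tridiagonal structure so that the global operator norm is controlled by the maximum of $O(p/K)$ block operator norms, then handle each (possibly masked) $K\times K$ block by a concentration bound for bilinear forms in Gaussian vectors combined with an $\varepsilon$-net and a union bound; the crucial $\sqrt{K}$ gain comes, in both arguments, from treating the off-diagonal masked blocks at the level of the bilinear form rather than via $\|\cdot\|_{1,1}$. The differences are in the supporting lemmas, and they are instructive. For the reduction, you write $E=D+U+U^T$ and use that $UU^T$ is block-diagonal, giving the constant $3$ directly; the paper instead invokes the norm-compression inequality of Cai--Yuan applied to the matrix of blockwise operator norms and then bounds that small matrix by its $\|\cdot\|_{1,1}$ norm --- same constant, slightly more machinery. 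For the concentration step, the paper derives a bespoke tail bound for $X^TAY$ (its Proposition 4) from the Mathai--Provost determinant formula for the moment generating function, and controls $\tr(B^2)$ via an entrywise-absolute-value computation (its Lemma 6) yielding the $\|\Sigma^{abs}\|_{op}$ factor in the statement; you instead fold the bilinear form into a quadratic form in the stacked $2K$-dimensional Gaussian and apply Hanson--Wright, with the mask rendered harmless by the one-line bound $\|(uv^T)\circ M\|_F\le 1$, so that $\tr(B^2)\le 2\|\Sigma\|_{op}^2\le 2\|\Sigma^{abs}\|_{op}^2$. Your route is more off-the-shelf and gives a marginally sharper constant (in terms of $\|\Sigma\|_{op}$ rather than $\|\Sigma^{abs}\|_{op}$); the paper's route is self-contained and makes the bilinear concentration inequality explicit as a reusable result. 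Your handling of the empirical centering (passing to $n-1$ i.i.d.\ rotated Gaussians) is also legitimate and replaces the paper's separate treatment of the $\bar X^TA\bar Y$ term. I see no gap in your argument.
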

\begin{proof}
For simplicity we assume $p/K$ is an integer, the case when $p/K$ is not an integer  can be similarly handled with slightly more technical complexity.   For a
$p\times p$ matrix $A$, let $A(k,\ell)$  denote
the $(k,\ell)$ submatrix (or ``block'') of the form 
\begin{equation}
\label{block}
\{A_{ij}:(i,j)\in[(k-1)K + 1,kK]\times[(\ell-1)K + 1,\ell K]\}.
\end{equation}
Also, let $A^{\ast}$ denote the $p/K\times p/K$ matrix with $(k,\ell)$ entry $\|A(k, \ell)\|_{op}$. Note that $A^{\ast}$ will be symmetric if $A$ is so.

We divide $\hat\Sigma_K$ into $(p/K)^2$ blocks of dimension $K\times
K$. First note that the number of non-zero blocks in each row or column of the blocks of  $\hat \Sigma_K$ is at most 3. To see this, consider the $(k,\ell)$th block $A(k,\ell)$ and assume it contains the $(i,j)$th element of $\hat \Sigma_K$. Then by the definition in~(\ref{block}), $ (k-1)*K + 1 \leq i \leq kK$ and $(\ell-1)*K + 1\leq j\leq \ell K$. If $k \geq \ell + 2$, then $i-j \geq (k-1)*K +1 - \ell K \geq K + 1$ and hence $\hat \sigma $ is zero. Hence if $k \geq \ell + 2$, and similarly if $\ell \geq k + 2$, $A(k,\ell)$ contains only zero elements. In other words, $A(k,\ell)$ might be non-zero only if $|k-\ell| \leq 1$. There are two types of blocks in $\hat \Sigma_K$ with
$|k-\ell|\leq 1$: diagonal blocks with $k=\ell$ and non-diagonal
blocks with $k\neq \ell$. For the diagonal blocks,  $\hat
\Sigma_K(k,k) = \hat \Sigma(k,k)$ and $\Sigma_K(k,k) =
\Sigma(k,k)$. Let $H_0 = (1_{\{i>j\}})_{1\leq i, j\leq K}$ be a strictly
lower-triangular matrix of ones. The off-diagonal matrices $\hat \Sigma_K(k,\ell)$ with $k-\ell = \pm 1$ have two forms: $\hat \Sigma(k,\ell)\ast H_0$ if $k<\ell$ and $\hat\Sigma(k,\ell) \ast H_0^T$  if $k >\ell$. Here $\ast$ is the Schur matrix multiplication. Similarly $\Sigma_K(k,\ell)$ is  $\Sigma(k,\ell)\ast H_0$ if $k<\ell$ and is $\Sigma(k,\ell)\ast H_0^T$ if $k>\ell$. See Figure~\ref{fig_blocks} for an illustration. All three forms of blocks in $\hat \Sigma_K$ have the general form $\hat\Sigma(k,\ell)*H$ for a $K\times K$ matrix $H = (h_{k\ell})$ with $|h_{k\ell}|\leq 1$ for all $(k,\ell)$.

\begin{figure}[htp]
\centering
\includegraphics[width=5in,  angle=0]{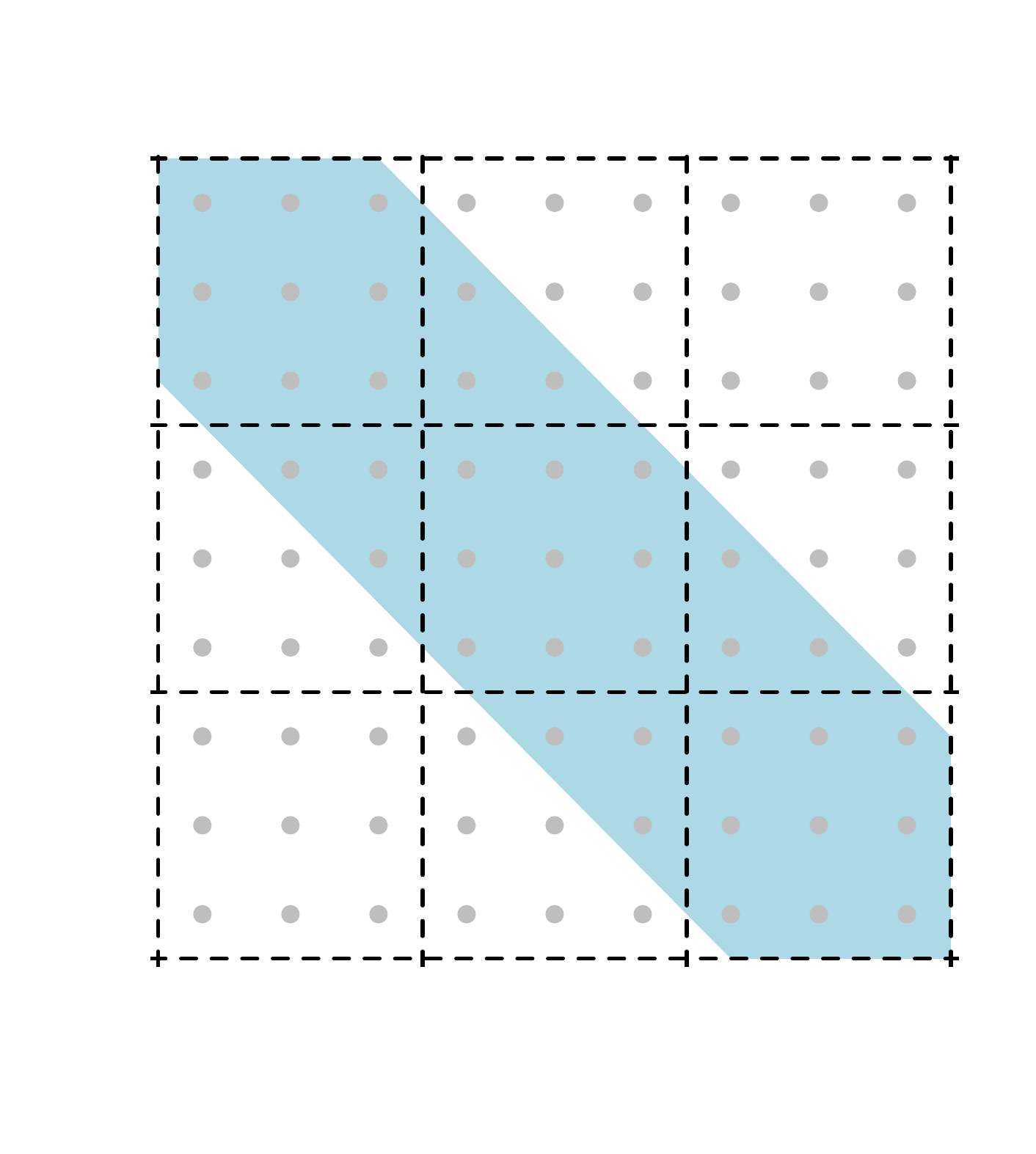}
\caption{\label{fig_blocks} An illustration of block partition for the banding estimator with $p=9$ and $K=3$. Each filled circle is an entry and these entries outside the shaded area are truncated to 0. The off-diagonal blocks have only $K*(K-1)/2 = 3$ non-zero entries.}
\end{figure}

Now we know  each row or column of $(\hat \Sigma_K-\Sigma_K)^{\ast}$ has at most three non-zero entries.
By the norm compression inequality in \cite{Cai:12b},
$$
\|\hat \Sigma_K-\Sigma_K\|_{op}\leq \|(\hat \Sigma_K -\Sigma_K)^{\ast}\|_{op}\leq 3\max_{|k-\ell|\leq 1} \|\hat \Sigma_K(k,\ell) - \Sigma_K(k,\ell)\|_{op},
$$
where the second inequality follows by~(\ref{11norm}).
Then Proposition~\ref{prop_band} is proved by
Proposition~\ref{prop_max}. 
\end{proof}

\begin{prop}
\label{prop_max}
There exists a constant $c>0$ such that
$$
\mathbb{P}\left\{\max_{|k-\ell| \leq 1} \|\hat\Sigma_K(k,\ell) - \Sigma_K(k,\ell)\|_{op} \geq  c \|\Sigma^{abs}\|_{op}\sqrt{\frac{K + \log p}{n}}\right\}\lesssim p^{-1}.
$$
Furthermore,
$$
\mathbb{E}\max_{|k-\ell|\leq 1} \|\hat \Sigma_K(k,\ell) - \Sigma_K(k,\ell)\|_{op}^2 \lesssim \frac{K +\log p}{n}.
$$
\end{prop}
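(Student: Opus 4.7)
The plan is to reduce Proposition~\ref{prop_max} to a per-block concentration statement and then take a union bound over the $O(p/K)$ pairs $(k,\ell)$ with $|k-\ell|\leq 1$. Concretely, I would prove that for each fixed such pair and every $x\geq 1$,
$$
\mathbb{P}\Bigl(\|\hat\Sigma_K(k,\ell)-\Sigma_K(k,\ell)\|_{op}\geq c\|\Sigma^{abs}\|_{op}\sqrt{(K+x)/n}\Bigr)\leq e^{-x},
$$
and then pick $x=c'\log p$ for a sufficiently large constant $c'$, so that a union bound over the $O(p/K)$ blocks absorbs the polynomial prefactor and leaves a $p^{-1}$ tail.

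For the diagonal blocks $k=\ell$, $\hat\Sigma_K(k,k)-\Sigma_K(k,k)=\hat\Sigma(k,k)-\Sigma(k,k)$ is the error of a $K$-dimensional Gaussian sample covariance, so the required tail is a standard Davidson--Szarek / Gaussian-quadratic-form Bernstein estimate (equivalently, an $\varepsilon$-net on $S^{K-1}$ followed by concentration of Gaussian chaoses of order two); the variance proxy is controlled by $\|\Sigma(k,k)\|_{op}\leq \|\Sigma\|_{op}\leq \|\Sigma^{abs}\|_{op}$, the first inequality being the principal-submatrix bound.

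The off-diagonal case $k=\ell\pm 1$ is the crux and, I expect, the main obstacle, because $\hat\Sigma_K(k,\ell)-\Sigma_K(k,\ell)=(\hat\Sigma(k,\ell)-\Sigma(k,\ell))*H$ with $H\in\{H_0,H_0^T\}$, and the triangular projection $A\mapsto A*H_0$ is \emph{not} uniformly bounded in operator norm. To avoid paying a logarithmic price for the projection, I would not try to remove the Schur factor; instead, I would run an $\varepsilon$-net argument directly on the bilinear form. For fixed unit vectors $u,v\in\mathbb{R}^K$, letting $Y_m,Z_m$ denote the centered $m$-th sample restricted to blocks $k$ and $\ell$,
$$
u^T\bigl[(\hat\Sigma(k,\ell)-\Sigma(k,\ell))*H\bigr]v=\frac{1}{n-1}\sum_{m=1}^n\bigl(Y_m^T M Z_m-\mathbb{E}[Y_1^T M Z_1]\bigr),
$$
where $M=(uv^T)*H$ satisfies $\|M\|_F\leq\|u\|_2\|v\|_2=1$ and $\|M\|_{op}\leq 1$. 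Since $(Y_m,Z_m)$ is jointly Gaussian with covariance of operator norm at most $\|\Sigma^{abs}\|_{op}$, the Hanson--Wright inequality, viewing the sum as a quadratic form in the stacked Gaussian vector $(Y_1,Z_1,\ldots,Y_n,Z_n)$, gives a sub-exponential tail with variance proxy $\|\Sigma^{abs}\|_{op}^2/n$. A $1/4$-net on $S^{K-1}\times S^{K-1}$ of cardinality $e^{c_2K}$ and the doubling identity $\|A\|_{op}\leq 2\sup_{u,v\in N}u^T A v$ then upgrade the pointwise tail to a uniform one; the $c_2K$ cost of the net is absorbed into the $K$ term inside $(K+x)$.

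The expectation bound follows by tail integration: on the complementary event, the crude bound $\|\hat\Sigma_K(k,\ell)-\Sigma_K(k,\ell)\|_{op}\leq 2K\max_{ij}|\hat\sigma_{ij}-\sigma_{ij}|$ together with standard Gaussian moments for $\max_{ij}|\hat\sigma_{ij}-\sigma_{ij}|$ shows that the residual contribution is of lower order than $(K+\log p)/n$. A minor bookkeeping point is that the sample-mean centering in $\hat\sigma_{ij}$ introduces a rank-one correction of order $\bar X_i\bar X_j=O_p(1/n)$ per entry, which is negligible on the required scale and handled by a brief separate argument.
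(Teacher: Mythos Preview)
Your plan is correct and follows the same architecture as the paper's proof: fix $(k,\ell)$, control the bilinear form $u^T\{\hat\Sigma(k,\ell)*H-\Sigma(k,\ell)*H\}v$ for fixed unit $u,v$ with $M=(uv^T)*H$, lift to the operator norm via an $\varepsilon$-net on $S^{K-1}\times S^{K-1}$, and take a union bound over the $O(p/K)$ blocks; the sample-mean correction is handled separately, exactly as you describe. The paper does not split diagonal and off-diagonal blocks but treats them uniformly through the Schur factor $H$, which is a cosmetic difference.

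The one substantive divergence is the concentration step. The paper regards the bilinear bound for $X^TAY$ with $X\neq Y$ as nonstandard and develops it from scratch: it computes the moment generating function of $X^TAY$ explicitly (their Lemma~\ref{lem5}), turns this into a sub-Gaussian tail (Proposition~\ref{prop2}), and then bounds the relevant $\tr(B^2)$ by $4\|\Sigma^{abs}\|_{op}^2$ via an entrywise argument (Lemma~\ref{lem6}). Your route---embed $Y_m^TMZ_m$ as a quadratic form in the stacked Gaussian $(Y_m,Z_m)$ and invoke Hanson--Wright with $\|M\|_F\leq 1$, $\|M\|_{op}\leq\|M\|_F\leq 1$, and $\|\Sigma^1\|_{op}\leq\|\Sigma\|_{op}\leq\|\Sigma^{abs}\|_{op}$---is more elementary and yields the same (indeed slightly sharper, with $\|\Sigma\|_{op}$ in place of $\|\Sigma^{abs}\|_{op}$) variance proxy without any new inequality. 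What the paper's approach buys is an explicit constant and a self-contained treatment; what yours buys is brevity and the observation that the ``new'' bilinear concentration is already implicit in the classical quadratic case. Either choice completes the proof as stated.
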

\begin{rem}
The proposition  provides probability and risk bounds for  sample covariance matrix and for  sample cross covariance matrix with upper triangular elements fixed at zero, and hence 
extends results in \cite{Bunea:13}, which  considers only   sample covariance matrix,  and also
complements Lemma 2 in \cite{Cai:12b}, which considers sample cross covariance matrices.
\end{rem}
\begin{rem}
The probability bound on $\|\hat\Sigma_K(k,\ell) -
\Sigma_K(k,\ell)\|_{op}$ for $k-\ell =\pm 1$ is non-standard,  as the
matrices involved have irregular forms and  fixed zero entries. The
derivation of this bound  requires concentration inequalities for the
bilinear form $X^TAY$ where $X$ and $Y$ are multivariate random
vectors and $A$ is an arbitrary non-random matrix. To our best
knowledge, such concentration inequalities have not been derived in
the literature,  in which only the special case $X = Y$ and $A$ is symmetric have been treated, see e.g. \cite{Boucheron:13}. We provide these inequalities in Proposition~\ref{prop2} in the appendix.
\end{rem}

The proof of Proposition~\ref{prop_max} is deferred to Section~\ref{sec:derivation}.

\section{Sure-tuned Bandwidth Selection}\label{sec:sure}
This section is devoted to the selection of the bandwidth of an
operator-norm  accurate estimator.  One possibility, as in
\cite{Bickel:08b}, is to use cross validation. If operator norm is used for defining the loss function, cross validation can be computationally quite intensive for large $p$ because about $O(p)$ operator norms of  $p\times p$ 
matrices have to be evaluated. \cite{Bickel:08b} used the maximum row sum norm $\|\cdot\|_{1,1}$ for defining the loss function. We propose an alternative approach, with low computational complexity.  Our procedure minimizes in $K$ a data-driven criterion  that is a function of  the bandwidth $K$. The proposed  criterion is a  {\it modified} unbiased estimator of the Frobenius-norm risk of $\hat{\Sigma}_K$, and its derivation follows the general principles  of Stein's unbiased risk estimation (Sure) \citep{Stein:81}.  

 To begin, note that the Frobenius risk  of $\hat \Sigma_K$ is 
\begin{align*}
\mathbb{E}\|\hat \Sigma_K-\Sigma\|_F^2&=\sum_{|i-j|\leq K-1} \mathbb{E}(\hat\sigma_{ij} - \sigma_{ij})^2 + \sum_{|i-j|\geq K} \sigma_{ij}^2\\
&=\sum_{|i-j|\leq K-1} \text{Var}(\hat\sigma_{ij}) + \sum_{|i-j|\geq K} \sigma_{ij}^2.
\end{align*}
The first term above  is the sum of variances of the entries in $\hat \Sigma_K$ while the second term is the sum of
squared biases of $\hat \Sigma_K$. The following proposition provides unbiased estimates of $\text{Var}(\hat\sigma_{ij})$ and $\sigma_{ij}^2$.
\begin{prop}
\label{prop1}
\begin{equation}
\label{prop1_eq1}
\text{Var}(\hat\sigma_{ij}) = \frac{\sigma_{ii}\sigma_{jj} + \sigma_{ij}^2}{n-1}
\end{equation}
and an unbiased estimate of $\text{Var}(\hat\sigma_{ij})$ can be given by 
\begin{equation*}
\widehat{\text{Var}}(\hat\sigma_{ij}) = a_n \hat\sigma_{ii}\hat\sigma_{jj} + b_n\hat\sigma_{ij}^2,
\end{equation*}
where $a_n = \frac{n-1}{n^2-n-2}$ and $b_n = \frac{n-3}{n^2-n-2}$.
Moreover, an unbiased estimate of $\sigma_{ij}^2$ can be given by 
$
c_n \hat\sigma_{ii}\hat\sigma_{jj} + d_n\hat\sigma_{ij}^2,
$
where $c_n = \frac{1-n}{n^2-n-2}$ and $d_n = \frac{(n-1)^2}{n^2-n-2}$.
\end{prop}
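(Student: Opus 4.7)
My plan is to reduce the whole statement to Wishart moment identities and then solve a small linear system for the unbiased-estimator coefficients; there is no substantive obstacle, the work is algebraic bookkeeping.

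First, I would establish the variance formula~(\ref{prop1_eq1}). Since the $X_k$ are i.i.d.\ $\mathrm{MVN}(\mu,\Sigma)$, the scaled sample covariance $(n-1)\hat\Sigma$ follows a Wishart distribution $W_p(\Sigma, n-1)$, for which the standard second-order moment identity gives
\[
\text{Cov}\bigl((n-1)\hat\sigma_{ij},\, (n-1)\hat\sigma_{kl}\bigr) \;=\; (n-1)\bigl(\sigma_{ik}\sigma_{jl} + \sigma_{il}\sigma_{jk}\bigr).
\]
Taking $(k,l)=(i,j)$ and dividing by $(n-1)^2$ yields~(\ref{prop1_eq1}). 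Taking instead the index pairs $(i,i)$ and $(j,j)$ yields the auxiliary identity
\[
\text{Cov}(\hat\sigma_{ii},\hat\sigma_{jj}) \;=\; \frac{2\sigma_{ij}^2}{n-1},
\]
which is needed below. (Equivalently, both identities follow by expanding the sample covariances in the centered observations and invoking Isserlis' theorem on Gaussian fourth moments, the only subtlety being the $\bar X$-correction that produces the $1/(n-1)$ normalization.)

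Next, I would compute the two second moments required for the unbiasedness check. Since $\hat\sigma_{ij}$ is unbiased for $\sigma_{ij}$,
\[
\mathbb{E}[\hat\sigma_{ij}^2] \;=\; \sigma_{ij}^2 + \frac{\sigma_{ii}\sigma_{jj}+\sigma_{ij}^2}{n-1}, \qquad \mathbb{E}[\hat\sigma_{ii}\hat\sigma_{jj}] \;=\; \sigma_{ii}\sigma_{jj} + \frac{2\sigma_{ij}^2}{n-1}.
\]

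Finally, for each target $T \in \{\text{Var}(\hat\sigma_{ij}),\, \sigma_{ij}^2\}$ I would seek coefficients $(\alpha,\beta)$ such that $\mathbb{E}[\alpha\,\hat\sigma_{ii}\hat\sigma_{jj} + \beta\,\hat\sigma_{ij}^2] = T$. Substituting the identities above and equating coefficients of the two independent monomials $\sigma_{ii}\sigma_{jj}$ and $\sigma_{ij}^2$ produces a $2\times 2$ linear system in $(\alpha,\beta)$ whose determinant simplifies to $(n^2-n-2)/(n-1)^2$, which is nonzero for $n\ge 3$. Solving the system with $T=\text{Var}(\hat\sigma_{ij})$ gives the stated $(a_n,b_n)$ and solving it with $T=\sigma_{ij}^2$ gives $(c_n,d_n)$. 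The only point worth flagging is the implicit requirement $n\ge 3$ for the system to be invertible, which is automatic in any regime where sample variances are meaningful.
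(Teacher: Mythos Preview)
Your proposal is correct and matches the paper's approach: the paper defers the proof to its Lemma~\ref{lem1}, which records exactly the two second-moment identities $\mathbb{E}[\hat\sigma_{ij}^2]$ and $\mathbb{E}[\hat\sigma_{ii}\hat\sigma_{jj}]$ that you derive (the paper proves them by the direct Gaussian fourth-moment expansion you mention as the alternative to the Wishart formula), after which the coefficients $(a_n,b_n)$ and $(c_n,d_n)$ follow from the same $2\times 2$ linear system you describe.
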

\begin{rem}
The proof is omitted as it follows straightforwardly from Lemma~\ref{lem1} in the appendix.  Equation~(\ref{prop1_eq1}) was first derived by \cite{Yi:13}. \end{rem}

\noindent By Proposition~\ref{prop1}, an unbiased estimate of the Frobenius  risk of $\hat\Sigma_K$ can therefore be given by: 

\begin{equation}\label{Fnorm}
\mbox{Sure}_F(K) = \sum_{|i-j|\leq K-1} \left( a_n \hat\sigma_{ii}\hat\sigma_{jj} + b_n \hat\sigma_{ij}^2\right)+\sum_{|i-j|\geq K}\left(c_n \hat\sigma_{ii}\hat\sigma_{jj} + d_n\hat\sigma_{ij}^2\right).
\end{equation}
One could then select
$$
\hat{K}_F = \arg\min_{K} \text{Sure}_F(K).
$$
A similar procedure has been suggested in \cite{Yi:13},  but for tapering estimators. 

We denote by $\hat P_{F}$ the Sure-tuned banding estimator with bandwidth equal to $\hat K_F$.
The estimator $\hat P_F$ is appropriate if the goal is to construct a Frobenius-norm accurate estimator.  However, it is known from the theoretic analysis in \citep{Cai:10}
that the  bandwidth for optimal  Frobenius norm estimation is asymptotically smaller  than what is needed for optimal operator norm estimation.  We  propose some
modification to criterion~(\ref{Fnorm}) above, that will encourage the
selection of a larger bandwidth. The  idea   is to  place a larger  weight  on the bias term, which is the second sum in~(\ref{Fnorm}), so that a larger bandwidth is selected. 
 We do this via the factor $K$ in the weights $W_{ijK}$ given by~(\ref{W}) below. Moreover, we notice that, over the class $\mathcal{C}_{\alpha}$, the entries $\sigma_{ij}^2$ corresponding to large $|i-j|$ are small,  but their estimates $c_n \hat\sigma_{ii}\hat\sigma_{jj} + d_n\hat\sigma_{ij}^2$, albeit unbiased, have variability  that can be much higher than the size of  $\sigma_{ij}^2$. Therefore, for a more stable selection of $K$, we attenuate the contribution of the estimates of $\sigma_{ij}^2$ with large $|i-j|$ via the exponentially decaying factor in~(\ref{W}).

Therefore we use the following criterion
$$
\text{Sure}_{op}(K) = \sum_{|i-j|\leq K-1} \left( a_n \hat\sigma_{ii}\hat\sigma_{jj} + b_n \hat\sigma_{ij}^2\right)+\sum_{|i-j|\geq K} W_{ijK}\left(c_n \hat\sigma_{ii}\hat\sigma_{jj} + d_n\hat\sigma_{ij}^2\right),
$$
where 
\begin{equation}
\label{W}
W_{ijK} = K\exp\left(1-\frac{|i-j|}{K}\right),
\end{equation}
and select 
$$
\hat{K}_{op} = \arg\min_{K} \text{Sure}_{op}(K).
$$

We call the banding estimator with bandwidth $\hat{K}_{op}$ the
``modified Sure-tuned banding estimator" and  denote it by $\hat P_{op}$.
 We now give a heuristic  argument why the above approach might select
 a bandwidth that is well-suited for estimation under the operator norm. We assume $|\sigma_{ij}|\leq M_1 |i-j|^{-\alpha - 1}$ for all $(i,j)$. Then
\begin{align*}
\mathbb{E} \text{Sure}_{op}(K) &= \sum_{|i-j|\leq K-1} \text{Var}(\hat\sigma_{ij}) + \sum_{|i-j|\geq K} KW_{ijK}\sigma_{ij}^2\\
& = O(K/n) + O\left\{\sum_{|i-j|\geq K} \exp\left(1-\frac{|i-j|}{K}\right)K^{-(2\alpha+1)}\right\}\\
& = O(K/n) + O(K^{-2\alpha}).
\end{align*}
Hence $\mathbb{E} \text{Sure}_{op}(K)$ is minimized only if $K = O\left(n^{-\frac{1}{2\alpha+1}}\right)$, and we recall that in Remark 1 above we showed that the optimal bandwidth for operator-norm estimation is of this order.  We further demonstrate experimentally in the following section that the estimator with a bandwidth thus selected has excellent operator norm behavior.



\section{Simulations}\label{sec:sim}
We compare the following 6 estimators:

\begin{enumerate}[(i)]
 \item $\hat P_{cv}$: the banding estimator for which the bandwidth is  selected by 10-fold cross validation with squared operator norm as the loss function ;
 \item $\hat P_{BL}$: the banding estimator in \cite{Bickel:08b} for which the bandwidth is selected by 10-fold cross validation with the maximum row sum norm $\|\cdot\|_{1,1}$ as the loss function;
 \item $\hat P_{CY}$: the block-thresholding estimator in \cite{Cai:12b}; 
  \item $\hat P_{YZ}$:  the Sure-tuned tapering estimator in
    \cite{Yi:13}; 
 \item $\hat P_{F}$: the Sure-tuned banding estimator;
 \item  $\hat P_{op}$: the modified Sure-tuned banding estimator.
\end{enumerate}

The data are generated from $\mathcal{N}(0,\Sigma$). 
Following \cite{Bickel:08b} and \cite{Cai:10}, the covariance matrix $\Sigma$ has the following form
$$
\sigma_{ij} = \rho |i-j|^{-(\alpha+1)},
$$
where $\rho = 0.6$ and $\alpha$ can be either $0.1$ or $0.5$. Similar to \cite{Yi:13}, we fix $n$ at 250 and let $p$ be either  of 250, 500 and 1000. For each scenario, we run 100 simulations and compute the mean squared errors in terms of the operator norm. For example, for the re-weighted Sure-tuned banding estimator $\hat P_{op}$, its mean squared error is 
$$
\frac{1}{100}\sum_{k=1}^{100}\|\hat P_{op}^k -\Sigma\|_{op}^2,
$$
where $\hat P_{op}^k$ is the estimate for the $k$th simulated
dataset. It is noted by \cite{Yi:13} that the optimal bandwidth for
the operator norm can be quite variable. To reduce the variability of the selected bandwidth,  $\hat K_{op}$ will be restricted to the interval $[\hat K_{F},\hat K_F^2]$.

Table~\ref{table1} gives the simulation results.  Several observations can be made from Table~\ref{table1}. First, the estimator $\hat P_{cv}$ using cross-validation, one of the most widely used statistical techniques,  has  the worst performance in this problem. We note that calculation of $\hat P_{cv}$ is also very time consuming when $p$ is large. Secondly,  it is interesting to see that the operator-norm rate-optimal $\hat P_{CY}$ is  dominated by three other Sure-type estimators, $\hat P_{FZ}$, $\hat P_{F}$ and $\hat P_{op}$. Third, the Sure-tuned banding and tapering estimators, $\hat P_{FZ}$ and $\hat P_{F}$, have  comparable MSEs  and the modified Sure-tuned banding estimator $\hat P_{op}$ always has the smallest MSEs, except for one scenario. The modified Sure-tuned banding estimator $\hat P_{op}$  has larger standard error than $\hat P_{FZ}$ and $\hat P_{F}$  because of the larger variability of the selected bandwidth (results not shown).

\begin{table}
\centering
\caption{\label{table1} Mean and standard deviations (in parenthesis) of squared errors in operator norm for the 6 estimators.}
\begin{tabular}{|cccccccc|}
\hline
$\alpha$&$p$
&$\hat P_{cv}$
&$\hat P_{BL}$
&$\hat P_{CY}$
&$\hat P_{YZ}$
&$\hat P_{F}$
&$\hat P_{op}$\\
\hline
\multirow{3}{*}{$0.1$}&250&7.96 (4.59)&6.07 (3.27)&13.34 (0.26)&5.36 (0.67)&5.38 (0.64)&4.61 (1.37)\\
&500&17.68 (12.32)& 8.36 (5.01) &15.86 (0.24)&7.73 (0.69)&7.85 (0.65)&6.05 (1.51)\\
&1,000&37.20 (28.78)&10.88 (7.21)&19.81 (0.18)&10.59 (0.60)&10.56 (0.49)&8.16 (1.78)\\
\hline
\multirow{3}{*}{$0.5$}&250&6.08 (4.60)&3.48 (3.23)&2.72 (0.12)&1.08 (0.13)&1.07 (0.14)&1.13 (0.32)\\
&500&12.59 (11.01) &4.44 (5.88)&2.62 (0.09)&1.22 (0.08)&1.21 (0.10)&1.18 (0.23)\\
&1,000&31.88 (32.18)&6.51(14.10)&2.79 (0.07)&1.35 (0.07)&1.33 (0.07)&1.27 (0.32)\\
\hline
\end{tabular}
\end{table}

%
\section{Proof of Proposition~\ref{prop_max}}\label{sec:derivation}

\begin{proof}

We start with studying $u^T\left\{\hat \Sigma(k,\ell)*H - \Sigma(k,\ell)*H\right\}v$, where $u, v\in\mathbb{S}^{K-1}$. Assume $X$ and $Y$ have a  joint real normal distribution 
$\textnormal{MVN}_{2K}(0, \Sigma^1)$ with 
$$
\Sigma^1 = \left(\begin{array}{cc}\Sigma_{11}&\Sigma_{12}\\
\Sigma_{21}&\Sigma_{22}\end{array}\right) 
$$
and 
$$
\Sigma_{11} = \Sigma(k,k), \Sigma_{22} = \Sigma(\ell,\ell), \Sigma_{12} =\Sigma_{21}^T = \Sigma(k,\ell).
$$
Let $A = (uv^T)*H$.
Then $u^T\left\{\hat \Sigma(k,\ell)*H - \Sigma(k,\ell)*H\right\}v$ is the same in distribution as
$$
 \frac{1}{n-1}\sum_{i=1}^n (X_i-\bar X)^TA(Y_i-\bar Y) -\tr(A\Sigma_{21})  ,
$$
where $(X_1,Y_1),\dots, (X_n,Y_n)$ are i.i.d.\ copies of $(X,Y)$, $\bar X = n^{-1}\sum_{i=1}^n X_i$, and
 $\bar Y = n^{-1}\sum_{i=1}^n Y_i$. It is easy to show that $\mathbb{E}(X_i^TAY_i) = \tr(A\Sigma_{21})$ and $ \mathbb{E}(\bar X^T A\bar Y) = n^{-1}\tr(A\Sigma_{21})$. Therefore,
\begin{equation}
\label{Q0}
\begin{split}
 &\frac{1}{n-1}\sum_{i=1}^n (X_i-\bar X)^TA(Y_i-\bar Y) -\tr(A\Sigma_{21}) \\
  =&\frac{n}{n-1}\left[\frac{1}{n}\sum_{i=1}^n \left\{X_i^TAY_i- \mathbb{E}(X_i^TAY_i)\right\} - \left\{\bar X^T A\bar Y - \mathbb{E}(\bar X^T A\bar Y)\right\}\right].
  \end{split}
\end{equation}

We first consider the term $n^{-1}\sum_{i=1}^n \left\{X_i^TAY_i- \mathbb{E}(X_i^TAY_i)\right\}$  in~(\ref{Q0}). Let $Q_i = X_i^TA Y_i, i=1,\dots, n$, then $Q_1,\dots, Q_n$ are i.i.d.\ copies of $Q = X^T AY$. Let $\bar Q = n^{-1}\sum_{i=1}^n (Q_i - \mathbb{E}Q_i)$, which equals $n^{-1}\sum_{i=1}^n \left\{X_i^TAY_i- \mathbb{E}(X_i^TAY_i)\right\}$ in distribution.  By Proposition~\ref{prop2}, 
for $0 < t< 1/2$,
$$
\mathbb{P}\left\{|\bar Q|>t \sqrt{\tr(B^2)}\right\} \leq 2\exp\left(- \frac{nt^2}{2}\right),
$$
where
$$
B = \Sigma^{1,1/2}\left(\begin{array}{cc} 0_{K, K}& A\\
A^T&0_{K,K}\end{array}\right)\Sigma^{1,1/2}.
$$
By Lemma~\ref{lem6} we have
$$
\tr(B^2) \leq 4\|\Sigma^{1,abs}\|_{op}^2\leq 4\|\Sigma^{abs}\|_{op}^2
$$
Therefore,
$$
\mathbb{P}\left\{|\bar Q|>2t \|\Sigma^{abs}\|_{op}\right\} \leq 2\exp\left(- \frac{nt^2}{2}\right),
$$
or equivalently,
\begin{equation}
\label{Q1}
\mathbb{P}\left\{\left|\frac{1}{n}\sum_{i=1}^n \left\{X_i^TAY_i- \mathbb{E}(X_i^TAY_i)\right\}\right|>2t \|\Sigma^{abs}\|_{op}\right\} \leq 2\exp\left(- \frac{nt^2}{2}\right).
\end{equation}

We next consider the term $\bar X^T A\bar Y - \mathbb{E}(\bar X^T A\bar Y)$ in~(\ref{Q0}). Note that $\bar X$ and $\bar Y$ have a joint real normal distribution $\text{MVN}(0, n^{-1}\Sigma^1)$. By similar derivation as above,
$$
\mathbb{P}\left\{ \left|\bar X^T A\bar Y - \mathbb{E}(\bar X^T A\bar Y) \right|> 2 t \|\Sigma^{abs}\|_{op}/n\right\}\leq 2\exp\left(- \frac{t^2}{2}\right),
$$
or equivalently,
\begin{equation}
\label{Q2}
\mathbb{P}\left\{ \left|\bar X^T A\bar Y - \mathbb{E}(\bar X^T A\bar Y) \right|> 2 t \|\Sigma^{abs}\|_{op}\right\}\leq 2\exp\left(- \frac{n^2t^2}{2}\right).
\end{equation}

Note that for any two random variables $Z_1$ and $Z_2$, 
$$
\mathbb{P}(|Z_1 - Z_1|>2x) \leq \mathbb{P}(|Z_1|>x) + \mathbb{P}(|Z_2|>x).
$$
Combining~(\ref{Q0}), (\ref{Q1}) and~(\ref{Q2}), we obtain
\begin{align*}
&\mathbb{P}\left[\left|u^T\left\{\hat \Sigma(k,\ell)*H - \Sigma(k,\ell)*H\right\}v\right|>\frac{4(n-1)}{n}\|\Sigma^{abs}\|_{op} t\right]\\
\leq& 2\exp\left(- \frac{nt^2}{2}\right) + 2\exp\left(- \frac{n^2t^2}{2}\right),
\end{align*}
which leads to
\begin{equation}
\label{der_eq1}
\mathbb{P}\left[\left|u^T\left\{\hat \Sigma(k,\ell)*H - \Sigma(k,\ell)*H\right\}v\right|>4\|\Sigma^{abs}\|_{op} t\right]\leq 4\exp\left(- \frac{nt^2}{2}\right).\end{equation}

Now we consider $\|\hat \Sigma(k,\ell)*H - \Sigma(k,\ell)*H\|_{op}$.
By Lemma~\ref{lem8}, there exists an $\delta$-net $\mathbb{Q}_K\in\mathbb{S}^{K-1}$ such that 
$$
\text{card}(\mathbb{Q}_K)\leq c_1\delta^{-K}K^{3/2}\log (1+K)
$$
for some constant $c_1>0$. It can also be shown that
\begin{align*}
&\|\hat \Sigma(k,\ell)*H - \Sigma(k,\ell)*H\|_{op}\\
=&\sup_{u, v\in\mathbb{S}^{K-1}}
 u^T\left\{\hat \Sigma(k,\ell)*H - \Sigma(k,\ell)*H\right\}v.
\\
 \geq& (1-2\delta)^{-1}\sup_{u, v\in\mathbb{Q}^{K-1}}
 u^T\left\{\hat \Sigma(k,\ell)*H - \Sigma(k,\ell)*H\right\}v.
\end{align*}
By~(\ref{der_eq1}) and the union bound,
\begin{align*}
&\mathbb{P}\left[\max_{|k-\ell|\leq 1}\|\hat \Sigma(k,\ell)*H - \Sigma(k,\ell)*H\|_{op} >t\right]\\
\leq&
\mathbb{P}\left[\max_{|k-\ell|\leq 1}\sup_{u, v\in \mathbb{Q}_K}u^T\left\{\hat \Sigma(k,\ell)*H - \Sigma(k,\ell)*H\right\}v>(1-2\delta)t \right]\\
\leq& 12c_1p\delta^{-2K}K^{3}\log^2 (1+K)\exp\left\{- \frac{nt^2(1-2\delta)^2}{32\|\Sigma^{abs}\|_{op}^2}\right\}.
\end{align*}
To summarize, if we let $W = \max_{|k-\ell|\leq 1}\|\hat \Sigma(k,\ell)*H - \Sigma(k,\ell)*H\|_{op}$, then
\begin{equation}
\label{der_eq2}
\mathbb{P}(W>t) \leq 12cp\delta^{-2K}K^{3}\log^2 (1+K)\exp\left\{- \frac{nt^2(1-2\delta)^2}{32\|\Sigma^{abs}\|_{op}^2}\right\}.
\end{equation}

To establish the probability bound in Proposition~\ref{prop_max}, 
similar to \cite{Cai:12b} we just need to rewrite~(\ref{der_eq2}) by letting $\delta = \exp(-3)$, $c_0 = \sqrt{192}/(1-2\delta)$ and
$$
t = c_0 \|\Sigma^{abs}\|_{op}\sqrt{\frac{K + \log p}{n}}.
$$ 
The inequality in expectation can be similarly derived by~(\ref{der_eq2}) and the fact that
$$
\mathbb{E}W^2 \leq  x + \int_x^{\infty} \mathbb{P}(W^2>t)dt
$$
for any $x>0$.
\end{proof}

\appendix

\section{A Lemma for Proposition~\ref{prop1}}
\begin{lem}
\label{lem1}
\begin{eqnarray}
\mathbb{E}(\hat\sigma_{ij}^2) &=& \frac{1}{n-1}\sigma_{ii}\sigma_{jj} + \frac{n}{n-1}\sigma_{ij}^2,\label{lem1_eq1}\\
\mathbb{E}(\hat\sigma_{ii}\hat\sigma_{jj}) &=&\sigma_{ii}\sigma_{jj} + \frac{2}{n-1}\sigma_{ij}^2.\label{lem1_eq2}
\end{eqnarray}
\end{lem}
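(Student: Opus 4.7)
The plan is to derive both identities simultaneously from a single general moment formula for $\mathbb{E}[\hat\sigma_{ij}\hat\sigma_{k\ell}]$, and then read off \eqref{lem1_eq1} and \eqref{lem1_eq2} as the special cases $(k,\ell)=(i,j)$ and $(k,\ell)=(j,j)$ with $(i,j)\to(i,i)$ respectively. Because $\hat\sigma_{ij}$ is translation invariant in the $X_{k,\cdot}$, I may assume $\mu=0$ without loss of generality; every quantity on the right hand side only depends on $\Sigma$, and the left hand side is unchanged by centering.

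First, I would rewrite the sample covariance entry using the centering matrix $M=(M_{k\ell})$ with $M_{k\ell}=\delta_{k\ell}-1/n$. This matrix is symmetric and idempotent with $\tr(M)=n-1$, so $M^2=M$ gives
\begin{equation*}
(n-1)\hat\sigma_{ij}=\sum_{k=1}^n(X_{k,i}-\bar X_i)(X_{k,j}-\bar X_j)=\sum_{a,b}M_{ab}X_{a,i}X_{b,j}.
\end{equation*}
Consequently
\begin{equation*}
(n-1)^2\,\hat\sigma_{ij}\hat\sigma_{k\ell}=\sum_{a,b,c,d}M_{ab}M_{cd}\,X_{a,i}X_{b,j}X_{c,k}X_{d,\ell}.
\end{equation*}

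Next, I invoke Isserlis' theorem (Wick's formula) for the jointly Gaussian vector with $\mu=0$. Since the samples are independent, $\mathbb{E}[X_{a,i}X_{b,j}]=\delta_{ab}\sigma_{ij}$, and the four-point moment decomposes as
\begin{equation*}
\mathbb{E}[X_{a,i}X_{b,j}X_{c,k}X_{d,\ell}]=\delta_{ab}\delta_{cd}\sigma_{ij}\sigma_{k\ell}+\delta_{ac}\delta_{bd}\sigma_{ik}\sigma_{j\ell}+\delta_{ad}\delta_{bc}\sigma_{i\ell}\sigma_{jk}.
\end{equation*}
Substituting and collapsing the sums over $a,b,c,d$ produces three traces of the centering matrix: $(\tr M)^2$, $\tr(M^{\!\top}M)$ and $\tr(M^2)$. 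Using $M^{\!\top}=M=M^2$ these equal $(n-1)^2$, $n-1$ and $n-1$, yielding the master identity
\begin{equation*}
\mathbb{E}[\hat\sigma_{ij}\hat\sigma_{k\ell}]=\sigma_{ij}\sigma_{k\ell}+\frac{1}{n-1}\bigl(\sigma_{ik}\sigma_{j\ell}+\sigma_{i\ell}\sigma_{jk}\bigr).
\end{equation*}

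Finally, setting $(k,\ell)=(i,j)$ gives $\mathbb{E}[\hat\sigma_{ij}^2]=\sigma_{ij}^2+(n-1)^{-1}(\sigma_{ii}\sigma_{jj}+\sigma_{ij}^2)$, which rearranges to \eqref{lem1_eq1}; and replacing the indices so that the two factors are $\hat\sigma_{ii}$ and $\hat\sigma_{jj}$ gives $\mathbb{E}[\hat\sigma_{ii}\hat\sigma_{jj}]=\sigma_{ii}\sigma_{jj}+(n-1)^{-1}(\sigma_{ij}^2+\sigma_{ij}^2)$, which is \eqref{lem1_eq2}. There is no real obstacle here; the only place to be careful is the bookkeeping in the Isserlis expansion and correctly identifying $\tr(M)=\tr(M^2)=n-1$, which is what produces the $1/(n-1)$ factors and ensures the mean term $\sigma_{ij}\sigma_{k\ell}$ carries coefficient exactly one.
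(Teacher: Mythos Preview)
Your proof is correct and is in fact cleaner and more general than the paper's. The paper works directly from the decomposition $(n-1)\hat\sigma_{ij}=\sum_k x_{k,i}x_{k,j}-n\bar x_i\bar x_j$, writes down the joint normal distribution of $(\bar x_i,\bar x_j,x_{1,i},x_{1,j})$, and then computes the mixed fourth moments $\mathbb{E}(\bar x_i^2 x_{1,j}^2)$, $\mathbb{E}(\bar x_i^2\bar x_j^2)$, $\mathbb{E}(\bar x_i\bar x_j x_{1,i}x_{1,j})$ one by one before assembling the two identities separately. Your route via the centering matrix $M$ and Isserlis' theorem replaces all of that case-work by a single trace computation, and as a bonus yields the full master formula $\mathbb{E}[\hat\sigma_{ij}\hat\sigma_{k\ell}]=\sigma_{ij}\sigma_{k\ell}+(n-1)^{-1}(\sigma_{ik}\sigma_{j\ell}+\sigma_{i\ell}\sigma_{jk})$, from which both \eqref{lem1_eq1} and \eqref{lem1_eq2} drop out by specialization. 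The paper's argument is slightly more elementary in that it never names Wick's formula, but it pays for that with more bookkeeping and has to treat the two target identities as separate calculations; your approach buys uniformity and immediately generalizes to arbitrary index pairs.
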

\begin{rem}
\cite{Yi:13} derived the above equalities, however, their result on $\mathbb{E}(\hat\sigma_{ii}\hat\sigma_{jj})$ is incorrect; see equations (A.7) and (A.12) therein. A quick check is to let $i=j$.
\end{rem}
\begin{proof}
We assume w.l.o.g.\ that $\mu=0$. We use $\bar x_i$ to denote  $n^{-1}\sum_{k=1}^n x_{k,i}$.
Note that he following equation for all pairs of $(i,j)$,
\begin{equation}
\label{lem1_eq3}
\mathbb{E}(x_i^2x_j^2) = \sigma_{ii}\sigma_{jj} + 2\sigma_{ij}^2.
\end{equation}
It's straightforward to show that 
$$
(n-1)\hat\sigma_{ij} = \sum_{k=1}^n x_{k,i}x_{k,j}  - n\bar x_i \bar x_j.
$$
Note that 
$$
(\bar x_i,\bar x_j ,x_{1,i}, x_{1,j})^T \sim \text{MVN}\left(0, 
\left(
\begin{array}{cccc}
\frac{\sigma_{ii}}{n}&\frac{\sigma_{ij}}{n}&\frac{\sigma_{ii}}{n}&\frac{\sigma_{ij}}{n}\\
\frac{\sigma_{ij}}{n}&\frac{\sigma_{jj}}{n}&\frac{\sigma_{ij}}{n}&\frac{\sigma_{ij}}{n}\\
\frac{\sigma_{ii}}{n}&\frac{\sigma_{ij}}{n}&\sigma_{ii}&\sigma_{ij}\\
\frac{\sigma_{ij}}{n}&\frac{\sigma_{ij}}{n}&\sigma_{ij}&\sigma_{jj}
\end{array}
\right)
\right).
$$
Hence 
\begin{eqnarray}
\label{lem1_eq4}
\mathbb{E}(\bar x_i^2 x_{1,j}^2) &=& \sigma_{ii}\sigma_{jj}/n + 2\sigma_{ij}^2/n,\\
\label{lem1_eq5}
\mathbb{E}(\bar x_i^2 \bar x_{j}^2) &=& \sigma_{ii}\sigma_{jj}/n^2 + 2\sigma_{ij}^2/n^2,\\
\label{lem1_eq6}
\mathbb{E} (\bar x_i \bar x_j x_{1,i} x_{1,j}) &=&\sigma_{ii}\sigma_{jj}/n^2+ \sigma_{ij}^2(1/n + 1/n^2) .
\end{eqnarray}

We first derive~(\ref{lem1_eq1}) by using~(\ref{lem1_eq3}), (\ref{lem1_eq5}) and~(\ref{lem1_eq6}). We have
\begin{align*}
&(n-1)^2\mathbb{E} (\hat\sigma_{ij}^2)\\
 &= \mathbb{E}\left (\sum_{k=1}^n x_{k,i}x_{k,j}  - n\bar x_i \bar x_j\right )^2\\
&=\mathbb{E}\left (\sum_{k=1}^n x_{k,i}x_{k,j}\right )^2 - 2n\mathbb{E}\left(\bar x_i \bar x_j \sum_{k=1}^n x_{k,i}x_{k,j} \right) + n^2 \mathbb{E}(\bar x_i^2 \bar x_j^2)\\
& = \sum_{k, k^{\prime}}\mathbb{E}(x_{k,i}x_{k,j} x_{k^{\prime}, i} x_{k^{\prime}, j}) - 2n^2 \mathbb{E} (\bar x_i \bar x_j x_{1,i} x_{1,j}) + n^2 \mathbb{E}(\bar x_i^2 \bar x_j^2)\\
& = \sum_{k=k^{\prime}} (\sigma_{ii}\sigma_{jj} + 2\sigma_{ij}^2) + \sum_{k\neq k^{\prime}}\sigma_{ij}^2 - 2(\sigma_{ii}\sigma_{jj} +  (n+1)\sigma_{ij}^2) + (\sigma_{ii}\sigma_{jj} + 2\sigma_{ij}^2)\\
& = (n-1)\sigma_{ii}\sigma_{jj} +n(n-1)\sigma_{ij}^2,
\end{align*}
which proves~(\ref{lem1_eq1}).

Next we derive~(\ref{lem1_eq2}) by using~(\ref{lem1_eq3}), (\ref{lem1_eq4}) and~(\ref{lem1_eq5}). We have
\begin{align*}
&(n-1)^2\mathbb{E}(\hat\sigma_{ii}\hat\sigma_{jj})\\
&=\mathbb{E}\left\{\left(\sum_{k=1}^n x_{k,i}^2  - n\bar x_i^2\right)\left(\sum_{k=1}^n x_{k,j}^2  - n \bar x_j^2\right)\right\}\\
&=\sum_{k,k^{\prime}} \mathbb{E}(x_{k,i}^2 x_{k^{\prime},j}^2) - n\sum_k \mathbb{E}(\bar{x}_i^2 x_{k,j}^2)-n\sum_k \mathbb{E}(\bar{x}_j^2 x_{k,i}^2) + n^2\mathbb{E}(\bar x_i^2 \bar x_j^2)\\
&=\sum_{k=k^{\prime}} (\sigma_{ii}\sigma_{jj} + 2\sigma_{ij}^2) + \sum_{k\neq k^{\prime}}(\sigma_{ii}\sigma_{jj}) - n^2\mathbb{E}(\bar x_i^2 x_{1,j}^2) - n^2\mathbb{E}(\bar x_j^2 x_{1,i}^2) + n^2\mathbb{E}(\bar x_i^2\bar x_j^2)\\
&=n^2\sigma_{ii}\sigma_{jj} + 2n\sigma_{ij}^2 - 2(n\sigma_{ii}\sigma_{jj} + 2\sigma_{ij}^2) + (\sigma_{ii}\sigma_{jj} + 2\sigma_{ij}^2)\\
&= (n-1)^2\sigma_{ii}\sigma_{jj} + 2(n-1)\sigma_{ij}^2,
\end{align*}
which proves~(\ref{lem1_eq2}).

\end{proof}

\section{Supplemental materials for Section~\ref{sec:derivation}}
\begin{prop}
\label{prop2}
Let the $p\times 1$ vector $X$ and $q\times 1$ vector $Y$ have a joint real normal distribution $\text{MVN}_{p+q}(0,\Sigma), \Sigma>0$. Let $A$ be a $p\times q $ real matrix. Let $Q = X^T AY$. Let $Q_1, \dots, Q_n$ be i.i.d.\ realizations of $Q$. Denote $\bar Q = n^{-1}\sum_{i=1}^n(Q_i-\mathbb{E}Q_i)$. Then, for $0 < t< 1/2$,
$$
\mathbb{P}\left\{|\bar Q|>t \sqrt{\tr(B^2)}\right\} \leq 2\exp\left(- \frac{nt^2}{2}\right),
$$
where
$$
B = \Sigma^{1/2}\left(\begin{array}{cc} 0_{K, K}& A\\
A^T&0_{K,K}\end{array}\right)\Sigma^{1/2}.
$$
\end{prop}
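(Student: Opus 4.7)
\textbf{Plan for the proof of Proposition~\ref{prop2}.} The idea is to reduce the bilinear form $Q=X^T A Y$ to a (centered) quadratic form in a standard Gaussian vector and then run a standard Chernoff--MGF argument. Stack $Z=(X^T,Y^T)^T\sim\text{MVN}_{p+q}(0,\Sigma)$ and write
\[
Q = \tfrac12\,Z^T\widetilde A\,Z,\qquad
\widetilde A = \begin{pmatrix}0 & A\\ A^T & 0\end{pmatrix},
\]
using the fact that $Z^T\widetilde A Z = 2X^T A Y$. Letting $W=\Sigma^{-1/2}Z\sim\text{MVN}_{p+q}(0,I)$ we obtain $Q=\tfrac12 W^T B W$ with the symmetric matrix $B=\Sigma^{1/2}\widetilde A\,\Sigma^{1/2}$ of the proposition. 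This representation is the whole point: it turns an ``unsymmetric'' bilinear form into a centered quadratic form in a standard Gaussian, for which the MGF is available in closed form.

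Next I will compute the cumulant generating function. Diagonalize $B$ as $B=U\Lambda U^T$ with eigenvalues $\lambda_1,\dots,\lambda_{p+q}$. Since $U^TW$ is again standard Gaussian, $Q-\mathbb{E}Q$ is distributed as $\tfrac12\sum_i\lambda_i(\xi_i^2-1)$ with $\xi_i$ i.i.d.\ $\mathcal N(0,1)$, and hence
\[
\log\mathbb{E}\bigl[e^{s(Q-\mathbb{E}Q)}\bigr]
=\tfrac12\sum_i\bigl[-\log(1-s\lambda_i)-s\lambda_i\bigr]
\qquad\text{for } |s|\,\|B\|_{op}<1 .
\]
Using the elementary inequality $-\log(1-y)-y\le y^2$ valid for $|y|\le 1/2$ (from the Taylor expansion $\sum_{k\ge 2}y^k/k\le y^2/2\cdot (1-|y|)^{-1}\le y^2$), we obtain, for every $s$ with $|s|\,\|B\|_{op}\le 1/2$,
\[
\log\mathbb{E}\bigl[e^{s(Q-\mathbb{E}Q)}\bigr]\le \tfrac{s^2}{2}\sum_i\lambda_i^2 = \tfrac{s^2}{2}\,\tr(B^2).
\]

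For the average $\bar Q$ of $n$ i.i.d.\ copies, independence gives $\log\mathbb{E}[e^{sn\bar Q}]\le \tfrac{ns^2}{2}\tr(B^2)$ in the same range of $s$. A standard Chernoff bound with the choice $s=t\sqrt{\tr(B^2)}/\tr(B^2)\cdot$\,(appropriate scaling), namely applying Markov's inequality to $\mathbb{P}(n\bar Q>u)$ with $u=nt\sqrt{\tr(B^2)}$ and $s=t/\sqrt{\tr(B^2)}$, gives the one-sided bound $\exp(-nt^2/2)$. The constraint $|s|\,\|B\|_{op}\le 1/2$ becomes $t\,\|B\|_{op}\le \tfrac12\sqrt{\tr(B^2)}$, which is automatic since $\tr(B^2)\ge \|B\|_{op}^2$ and we are assuming $t<1/2$. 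Symmetrizing (the same argument applied to $-\bar Q$) and a union bound produce the factor $2$ on the right-hand side, proving the proposition.

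The ``hard'' step is essentially the bookkeeping: verifying the algebraic identity $Q=\tfrac12 Z^T\widetilde A Z$ together with the symmetry of $B$ so that the spectral decomposition can be used, and then making sure the admissible range of $s$ in the Chernoff optimization is compatible with the assumption $t<1/2$. Once those are in place, the concentration statement is a routine sub-exponential tail bound of Hanson--Wright flavor applied in the sub-Gaussian regime.
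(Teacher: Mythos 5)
Your proposal is correct and follows essentially the same route as the paper: bound the moment generating function of $Q-\mathbb{E}Q$ by $\exp\{s^2\tr(B^2)/2\}$ via the eigenvalues of $B$ and the elementary inequality $-\log(1-y)-y\le y^2$ for $|y|\le 1/2$ (the paper's Lemma~\ref{lem4}), then apply a Chernoff bound and verify that $\sqrt{\tr(B^2)}\ge\|B\|_{op}$ makes the range $0<t<1/2$ admissible. The only cosmetic difference is that you derive the MGF identity directly by writing $Q=\tfrac12 W^TBW$ for a whitened Gaussian $W$ and diagonalizing, whereas the paper quotes the determinant formula of Mathai before performing the same spectral expansion.
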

\begin{rem}
The proposition also follows if $X=Y$ by the remark right after Lemma~\ref{lem5}.
\end{rem}


\begin{proof}
By Lemma~\ref{lem5},  
$$
\mathbb{E}\exp\left\{t(Q-\mathbb{E}Q)\right\} \leq \exp\left\{\frac{1}{2}t^2 \tr(B^2)\right\}
$$
for $|t| < \frac{1}{2\|B\|_{op}}$.
Then 
$$
\mathbb{E}\exp(t\bar Q) \leq \exp\left\{ \frac{\tr(B^2)}{2n}t^2\right\}
$$
for $|t| <\frac{n}{2\|B\|_{op}} $. An application of Lemma~\ref{lem7} yields
$$
\mathbb{P}\left\{|\bar Q|>t \sqrt{\tr(B^2)}\right\} \leq 2\exp\left(- \frac{nt^2}{2}\right)
$$
for  $0 < t < \frac{\sqrt{\tr(B^2)}}{2\|B\|_{op}}$. It is easy to show that $\sqrt{\tr(B^2)}/\|B\|_{op}\geq 1$, hence we can always let $0 < t <1/2$.

\end{proof}

\begin{lem}
\label{lem4}
For $x>-1/2$,
$$
\log (1+x) - x + x^2 \geq 0.
$$
\end{lem}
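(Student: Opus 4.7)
The plan is a straightforward one-variable calculus argument: define $f(x) = \log(1+x) - x + x^2$ on the open interval $(-1/2, \infty)$, locate its critical points, and verify that the unique critical point is a global minimum at which $f$ vanishes. Since the inequality is an equality at $x=0$ and the claim is symmetric neither in sign nor in magnitude around $0$, I expect the sign analysis of $f'$ on each side of $0$ to be the crux.

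First I would differentiate, obtaining
\begin{equation*}
f'(x) = \frac{1}{1+x} - 1 + 2x = \frac{1 - (1+x) + 2x(1+x)}{1+x} = \frac{x(1+2x)}{1+x}.
\end{equation*}
The key observation is that on the domain $x > -1/2$ both $1+x$ and $1+2x$ are strictly positive, so $f'(x)$ has the same sign as $x$. Thus $f$ is strictly decreasing on $(-1/2, 0)$ and strictly increasing on $(0, \infty)$, which identifies $x=0$ as the unique global minimizer of $f$ on $(-1/2, \infty)$.

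Finally, since $f(0) = \log 1 - 0 + 0 = 0$, we conclude $f(x) \geq f(0) = 0$ for every $x > -1/2$, which is the stated inequality. There is no genuine obstacle here; the only point one must not overlook is the restriction $x > -1/2$, which is precisely what guarantees $1 + 2x > 0$ and hence the monotonicity conclusion from the factored form of $f'$. An alternative route would be to note that the function is smooth on $(-1/2, \infty)$, has $f(0)=f'(0)=0$, and that $f''(x) = -1/(1+x)^2 + 2 \geq 2 - 4 = -2$ is not sign-definite everywhere; so the cleaner path is the first-derivative sign analysis above rather than a Taylor remainder argument.
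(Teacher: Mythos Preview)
Your proof is correct and follows essentially the same approach as the paper: define $f(x)=\log(1+x)-x+x^2$, factor $f'(x)=\dfrac{x(1+2x)}{1+x}$, use $x>-1/2$ to conclude $f'$ changes sign only at $0$, and evaluate $f(0)=0$. The paper's argument is identical up to wording.
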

\begin{proof}
Let $f(x) = \log (1+x) - x + x^2$. Then
$$\frac{\partial f(x)}{\partial x} = \frac{1}{1+x} - 1 + 2x = \frac{x(2x+1)}{x + 1}.
$$
So $\frac{\partial f(x)}{\partial x} <0$ if $-1/2 < x <0$,  $\frac{\partial f(x)}{\partial x} >0$ if $x>0$, and $f(0) = 0$, which leads to
$f(x)\geq 0$ for all $x>-1/2$.
\end{proof}

\begin{lem}
\label{lem5}
Let the $p\times 1$ vector $X$ and  $q\times 1$ vector $Y$ have a joint real normal 
distribution $\textnormal{MVN}_{p+q}(0,\Sigma)$, $\Sigma>0$. Let $A$ be a $p\times q$ real matrix. Let
$Q = X^TAY$. Then
\begin{equation}
\label{lem5_eq1}
\mathbb{E} \exp(tQ) = \frac{1}{\sqrt{\textnormal{det}\left(I_{p+q, p+q} - t B\right)}}
\end{equation}
for $|t| <\frac{1}{2\|B\|_{op}}$,
where $\textnormal{det}(\cdot)$ denotes the determinant of a square matrix and 
$$
B = \Sigma^{1/2}\left(\begin{array}{cc} 0_{p, p}& A\\
A^T&0_{q,q}\end{array}\right)\Sigma^{1/2}.
$$ 
Moreover,
\begin{equation}
\label{lem5_eq2}
\mathbb{E}\exp\left\{t(Q-\mathbb{E}Q)\right\} \leq \exp\left\{\frac{1}{2}t^2 \tr(B^2)\right\}
\end{equation}
for $|t| < \frac{1}{2\|B\|_{op}}$, where $\mathbb{E}Q = \tr(B)/2$.
\end{lem}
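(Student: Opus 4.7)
The plan is to rewrite $Q$ as a quadratic form in a standardized Gaussian vector, diagonalize, and then reduce everything to the well-known chi-squared MGF, to which Lemma~\ref{lem4} can be applied for the sub-exponential bound.

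First I would introduce $Z = (X^T, Y^T)^T \sim \text{MVN}_{p+q}(0,\Sigma)$ and observe that, since the scalar $X^TAY$ equals its own transpose,
$$
Q = X^T A Y = \tfrac{1}{2} Z^T \tilde A Z, \qquad \tilde A = \begin{pmatrix} 0_{p,p} & A \\ A^T & 0_{q,q} \end{pmatrix}.
$$
Writing $Z = \Sigma^{1/2} W$ with $W \sim \text{MVN}_{p+q}(0, I)$ gives $Q = \tfrac{1}{2} W^T B W$, where $B = \Sigma^{1/2}\tilde A \Sigma^{1/2}$ is the symmetric matrix from the statement. So $Q$ is half a quadratic form in an i.i.d.\ standard Gaussian vector.

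Next I would diagonalize $B = U^T D U$ with $U$ orthogonal and $D = \mathrm{diag}(\lambda_1,\dots,\lambda_{p+q})$, and set $V = UW$, which is again $\text{MVN}(0,I)$. Then $Q = \tfrac{1}{2}\sum_i \lambda_i V_i^2$, the $V_i^2$ being independent $\chi^2_1$ variables. Using the standard MGF $\mathbb{E} e^{s V_i^2} = (1-2s)^{-1/2}$ valid for $s < 1/2$, substitution of $s = t\lambda_i/2$ gives
$$
\mathbb{E} e^{tQ} = \prod_{i=1}^{p+q}(1 - t\lambda_i)^{-1/2} = \big[\mathrm{det}(I - tB)\big]^{-1/2},
$$
provided $t\lambda_i < 1$ for all $i$. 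The condition $|t| < 1/(2\|B\|_{op})$ is more than enough to ensure this (and is the threshold that will be needed in the next step), which proves (\ref{lem5_eq1}). Differentiating, or equivalently taking $\mathbb{E}(\tfrac{1}{2}\sum \lambda_i V_i^2) = \tfrac{1}{2}\sum \lambda_i = \tr(B)/2$, identifies the mean.

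For (\ref{lem5_eq2}), I would take logs of the centered MGF:
$$
\log \mathbb{E} \exp\{t(Q - \mathbb{E}Q)\} = -\tfrac{1}{2}\sum_i \bigl[t\lambda_i + \log(1 - t\lambda_i)\bigr].
$$
Now apply Lemma~\ref{lem4} with $x = -t\lambda_i$: the condition $|t|<1/(2\|B\|_{op})$ ensures $|t\lambda_i|<1/2$, so $\log(1-t\lambda_i) \geq -t\lambda_i - t^2\lambda_i^2$, which rearranges to $-[t\lambda_i + \log(1-t\lambda_i)] \leq t^2 \lambda_i^2$. Summing and using $\sum_i \lambda_i^2 = \tr(B^2)$ gives
$$
\log \mathbb{E} \exp\{t(Q-\mathbb{E}Q)\} \leq \tfrac{1}{2} t^2 \tr(B^2),
$$
which is (\ref{lem5_eq2}). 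The only delicate point is aligning the convergence radii so that Lemma~\ref{lem4} can be invoked uniformly in $i$; the factor $2$ in the stated range $|t|<1/(2\|B\|_{op})$ is there precisely to guarantee $-t\lambda_i > -1/2$. Everything else is a routine eigenvalue reduction, so I do not anticipate a substantive obstacle.
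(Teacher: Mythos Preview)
Your proof is correct and follows essentially the same route as the paper's: both reduce $\mathbb{E}e^{tQ}$ to the eigenvalue product $\prod_i(1-t\lambda_i)^{-1/2}$ and then apply Lemma~\ref{lem4} termwise to obtain the sub-Gaussian bound. The only difference is that the paper obtains (\ref{lem5_eq1}) by citing \cite{Mathai:92} and then rewriting, whereas you derive it directly via the quadratic-form representation $Q=\tfrac12 W^TBW$ and the $\chi^2_1$ MGF---a self-contained alternative that is perhaps preferable.
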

\begin{rem}
If $X=Y$, inequality (\ref{lem5_eq2}) still holds.
\end{rem}

\begin{proof}
\cite{Mathai:92} showed that 
$$
\mathbb{E} \exp(tQ) = \frac{1}{\sqrt{\textnormal{det}\left\{\Sigma^{-1} - t \left(\begin{array}{cc} 0_{p, p}& A\\
A^T&0_{q,q}\end{array}\right)\right\}\textnormal{det}(\Sigma) }}
$$
which leads to equation~(\ref{lem5_eq1}) by using the equality $\textnormal{det}(UV) = \textnormal{det}(U)\textnormal{det}(V)$ for square matrices $U$ and $V$ and that $\textnormal{det}(\Sigma) = \textnormal{det}(\Sigma^{1/2})\textnormal{det}(\Sigma^{1/2})$.
 Let $\{\lambda_1,\dots, \lambda_{p+q}\}$ denote the 
collection of all eigenvalues of $B$. Then
\begin{align*}
\mathbb{E}\exp(tQ) &= \prod_{k=1}^{p+q} (1- t\lambda_k)^{-1/2} \\
&= \exp\left\{-\frac{1}{2}\sum_{k=1}^{p+q} \log (1-t\lambda_k)\right\}\\
&\leq \exp\left\{-\frac{1}{2}\sum_{k=1}^{p+q}(-t\lambda_k - t^2\lambda_k^2) \right\}\\
&= \exp\left\{\frac{1}{2}t\sum_{k=1}^{p+q}\lambda_k +\frac{1}{2}t^2 \sum_{k=1}^{p+q} \lambda_k^2\right\}.
\end{align*}
In the above inequality we applied Lemma~\ref{lem4} for each $\log (1-t\lambda_k)$.
Note that 
\begin{align*}
\sum_{k=1}^{p+q} \lambda_k &= \tr(B)\\
& =\tr\left\{\left(\begin{array}{cc} 0_{p, p}& A\\
A^T&0_{q,q}\end{array}\right)\Sigma\right\}\\
& = \tr\left\{\left(\begin{array}{cc} 0_{p, p}& A\\
A^T&0_{q,q}\end{array}\right)\left(\begin{array}{cc}\Sigma_{11}&\Sigma_{12}\\
\Sigma_{21}&\Sigma_{22}\end{array}\right) \right\}\\
& = 2\tr(A\Sigma_{21})\\
& =2 \mathbb{E}Q
\end{align*}
and
$$
\sum_{k=1}^{p+q} \lambda_k^2 = \tr(B^2).
$$
Hence
$$
\mathbb{E}\exp\left\{t(Q-\mathbb{E}Q)\right\} \leq \exp\left\{\frac{1}{2}t^2  \tr(B^2)\right\}
$$
which is~(\ref{lem5_eq2}).
\end{proof}

\begin{lem}
\label{lem7}
Let $c_0$ and $c_1$ be two constants greater than 0.
Let $Q$ be a real random variable with mean zero and satisfies 
$$
\mathbb{E}\exp(tQ) \leq \exp(c_0t^2)
$$
for any $|t| < c_1$. Then for $0<t <2c_0c_1$,
$$
\mathbb{P}\left\{ Q\geq t\right\}\leq \exp\left(-\frac{t^2}{4c_0}\right)
$$
and
$$
\mathbb{P}\left\{ Q\leq - t\right\}\leq \exp\left(-\frac{t^2}{4c_0}\right).
$$
\end{lem}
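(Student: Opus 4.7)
The plan is to run the standard Chernoff/Cramér argument: combine Markov's inequality applied to the exponential moment with an explicit optimization over the free parameter, being careful that the optimizer lies in the admissible range $|t| < c_1$ where the MGF hypothesis holds.

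First, I would fix $t > 0$ and any $s$ with $0 < s < c_1$, and write
\begin{equation*}
\mathbb{P}(Q \geq t) = \mathbb{P}\bigl(e^{sQ} \geq e^{st}\bigr) \leq e^{-st}\,\mathbb{E}\,e^{sQ} \leq \exp\bigl(c_0 s^2 - st\bigr),
\end{equation*}
using Markov's inequality and then the hypothesis. The right-hand side is minimized (over all real $s$) at $s^* = t/(2c_0)$, yielding the value $\exp(-t^2/(4c_0))$, which is exactly the desired bound.

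Next, the only nontrivial point is ensuring that this optimizer is legal, i.e.\ that $s^* < c_1$. This translates into $t/(2c_0) < c_1$, equivalently $t < 2 c_0 c_1$, which is precisely the restriction imposed on $t$ in the statement. Under this restriction the choice $s = s^*$ is admissible and the upper-tail bound follows. For the lower tail, I would simply observe that the MGF hypothesis $\mathbb{E}\exp(tQ) \leq \exp(c_0 t^2)$ is symmetric in $t$ (the bound depends only on $t^2$), so it applies equally to $-Q$ with mean zero; repeating the same Chernoff step on $-Q$ yields $\mathbb{P}(Q \leq -t) = \mathbb{P}(-Q \geq t) \leq \exp(-t^2/(4c_0))$ under the same restriction on $t$.

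There is no real obstacle here beyond bookkeeping: the argument is the textbook derivation of a sub-Gaussian tail bound from a sub-Gaussian MGF bound, with the only wrinkle being that the hypothesis is valid only in a neighborhood $|s| < c_1$ of the origin, which forces the corresponding constraint $t < 2 c_0 c_1$ on the tail parameter. No additional lemmas are needed.
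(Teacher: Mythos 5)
Your proposal is correct and follows essentially the same route as the paper: a Chernoff/Markov bound with the optimizing parameter $s^\ast = t/(2c_0)$, whose admissibility $s^\ast < c_1$ is exactly the stated restriction $t < 2c_0c_1$, and the lower tail handled by applying the same argument to $-Q$. No meaningful differences.
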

\begin{proof}
Let $a = t/(2c_0)$. Then
\begin{align*}
\mathbb{P}(Q>t) &=\mathbb{P}\left\{\exp(aQ)>\exp(at)\right\}\\
&\leq \exp(-at)\mathbb{E} \exp(aQ)\\
&\leq \exp(c_0a^2 - at)\\
& =  \exp\left(-\frac{t^2}{4c_0}\right)
\end{align*}
Similarly we derive that 
$$
\mathbb{P}(Q<-t) \leq \exp\left(-\frac{t^2}{4c_0}\right)
$$
and the proof is complete.
\end{proof}

\begin{lem}
\label{lem6}
Let $A = (uv^T)*H$, where $u, v\in\mathbb{S}^{K-1}$ and $H = (h_{k\ell})_{1\leq k,\ell\leq K}$ with $|h_{k\ell}|\leq 1$ for all $(k,\ell)$. Let $\Sigma$ 
be an $2K\times 2K$ covariance matrix with 4 $K\times K$ blocks
$$
\Sigma = \left(\begin{array}{cc} \Sigma_{11}& \Sigma_{12}\\
\Sigma_{21}&\Sigma_{22}\end{array}\right).
$$
Let
$$
B = \Sigma^{1/2}\left(\begin{array}{cc} 0_{K, K}& A\\
A^T&0_{K,K}\end{array}\right)\Sigma^{1/2}.
$$ 
Then
$$
\tr(B^2) \leq 2\|\Sigma_{12}^{abs}\|_{op}^2 + 2\|\Sigma_{11}^{abs}\|_{op}\|\Sigma_{22}^{abs}\|_{op},
$$
where for a matrix $C= (c_{k\ell})_{1\leq k, \ell\leq K}$, $C^{abs}  = (|c_{k\ell}|)_{1\leq k, \ell\leq K}$.
\end{lem}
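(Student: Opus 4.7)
The plan is to eliminate the square-root factors from $B$ by cyclic invariance of the trace, reduce to a scalar problem involving only $A$ and the blocks of $\Sigma$, and then exploit the rank-one structure of $A$. Writing $M = \left(\begin{array}{cc}0 & A \\ A^T & 0\end{array}\right)$, cyclicity gives $\tr(B^2) = \tr(\Sigma^{1/2} M \Sigma M \Sigma^{1/2}) = \tr(M\Sigma M\Sigma)$. Expanding $M\Sigma$ in block form yields
$$M\Sigma = \left(\begin{array}{cc} A\Sigma_{21} & A\Sigma_{22} \\ A^T\Sigma_{11} & A^T\Sigma_{12}\end{array}\right),$$
and summing the traces of the two diagonal blocks of $(M\Sigma)^2$, together with $\Sigma_{12}=\Sigma_{21}^T$ and one more application of cyclicity to pair up symmetric terms, gives
$$\tr(B^2) = 2\,\tr(A\Sigma_{21}A\Sigma_{21}) + 2\,\tr(A\Sigma_{22}A^T\Sigma_{11}).$$

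The key step is to bound each of these two traces using the fact that $A = (uv^T)*H$ with $|h_{k\ell}|\leq 1$, so the entrywise dominance $A^{abs}\leq |u|\,|v|^T$ holds, where $|u|,|v|$ denote the vectors of entrywise absolute values of $u,v$. Combined with the elementary inequality $|\tr(C_1\cdots C_m)| \leq \tr(C_1^{abs}\cdots C_m^{abs})$, which follows by applying the triangle inequality entrywise to each product and then to the diagonal sum, both summands collapse to products of quadratic forms in $|u|,|v|$ against the blocks $\Sigma_{ij}^{abs}$:
$$|\tr(A\Sigma_{21}A\Sigma_{21})| \leq \bigl(|v|^T\Sigma_{21}^{abs}|u|\bigr)^2, \quad |\tr(A\Sigma_{22}A^T\Sigma_{11})| \leq \bigl(|v|^T\Sigma_{22}^{abs}|v|\bigr)\bigl(|u|^T\Sigma_{11}^{abs}|u|\bigr).$$
Since $\|u\|_2=\|v\|_2=1$, these are in turn bounded by $\|\Sigma_{21}^{abs}\|_{op}^2=\|\Sigma_{12}^{abs}\|_{op}^2$ and $\|\Sigma_{11}^{abs}\|_{op}\|\Sigma_{22}^{abs}\|_{op}$ respectively, and multiplying by $2$ yields the claim.

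I expect the only non-routine step to be the rank-one reduction via the entrywise bound on $A$: once one observes that $A^{abs}\leq |u|\,|v|^T$ and invokes the entrywise triangle inequality inside the trace, the problem collapses to inner products of the form $|v|^T\Sigma_{ij}^{abs}|u|$ and quadratic forms $|u|^T\Sigma_{ii}^{abs}|u|$, which are manifestly controlled by the operator norms of the $\Sigma_{ij}^{abs}$. The remainder is routine block algebra and cyclic manipulation of the trace.
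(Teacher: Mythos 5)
Your proposal is correct and follows essentially the same route as the paper's proof: cyclic reduction of $\tr(B^2)$ to $\tr\bigl((M\Sigma)^2\bigr)$, block expansion into the two paired trace terms, the entrywise domination $A^{abs}\leq |u|\,|v|^T$ combined with the entrywise triangle inequality inside the trace, and the final passage to quadratic forms bounded by the operator norms of the $\Sigma_{ij}^{abs}$. No substantive differences to report.
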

\begin{proof}
First we have
$
\tr(B^2) = \tr(C^2),
$
where 
\begin{align*}
C =& \left(\begin{array}{cc} 0_{K, K}& A\\
A^T&0_{K,K}\end{array}\right)\Sigma\\
=&\left(\begin{array}{cc} A\Sigma_{21}& A\Sigma_{22}\\
A^T\Sigma_{11}&A^T\Sigma_{12}\end{array}\right).
\end{align*}
Next
\begin{align*}
\tr(C^2) =&\tr(A\Sigma_{21}A\Sigma_{21} +A\Sigma_{22}A^T\Sigma_{11} + A^T\Sigma_{11}A\Sigma_{22}
+A^T\Sigma_{12}A^T\Sigma_{12} )\\
=&2\tr(A^T\Sigma_{12}A^T\Sigma_{12}) + 2\tr(A^T\Sigma_{11}A\Sigma_{22})\\
\leq&2\tr(A^{abs,T}\Sigma_{12}^{abs}A^{abs,T}\Sigma_{12}^{abs}) +  2\tr(A^{abs,T}\Sigma_{11}^{abs}A^{abs}\Sigma_{22}^{abs})\\
\leq&2\tr(v^{abs}u^{abs, T}\Sigma_{12}^{abs} v^{abs}u^{abs, T} \Sigma_{12}^{abs}) +  2\tr(v^{abs}u^{abs, T}\Sigma_{11}^{abs}u^{abs}v^{abs, T}\Sigma_{22}^{abs})\\
=&2(u^{abs, T}\Sigma_{12}v^{abs})^2 + 2(u^{abs, T}\Sigma_{11}^{abs}u^{abs}) (v^{abs, T}\Sigma_{22}^{abs}v^{abs})\\
\leq &2\|\Sigma_{12}^{abs}\|_{op}^2 + 2\|\Sigma_{11}^{abs}\|_{op}\|\Sigma_{22}^{abs}\|_{op}.
\end{align*}
\end{proof}

We put together  parts of the proof of Lemma 2 in \cite{Cai:12b} and have the following lemma.

\begin{lem}
\label{lem8}
For any matrix $A\in\mathbb{R}^{K\times K}$ and any $\delta$-net $\mathbb{Q}_K$ of $\mathbb{S}^{K-1}$ with $\delta <0.5$,
$$
\|A\|_{op} \leq (1-2\delta)^{-1} \sup_{u, v\in \mathbb{Q}_K} u^TAv.
$$
Moreover $\mathbb{Q}_K$ can be selected such that 
$$
\text{card}(\mathbb{Q}_K)\leq c\delta^{-K}K^{3/2}\log (1+K)
$$
for some absolute constant $c>0$. 
\end{lem}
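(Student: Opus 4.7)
The plan is to prove the two claims separately, since the operator-norm approximation bound and the cardinality bound rely on independent standard arguments from the theory of $\varepsilon$-nets.

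For the first inequality, I would start from the variational characterization $\|A\|_{op} = \sup_{u,v \in \mathbb{S}^{K-1}} u^T A v$, which holds for any real square matrix because this supremum equals the largest singular value. By compactness of $\mathbb{S}^{K-1}\times \mathbb{S}^{K-1}$, a maximizing pair $(u^*,v^*)$ exists. By the $\delta$-net property of $\mathbb{Q}_K$, pick $u, v \in \mathbb{Q}_K$ with $\|u^*-u\|_2\leq \delta$ and $\|v^*-v\|_2\leq \delta$. I would then split
\[
u^{*T} A v^* = u^T A v + (u^* - u)^T A v^* + u^T A (v^* - v),
\]
and bound each of the last two terms by $\delta \|A\|_{op}$ using Cauchy--Schwarz combined with $\|A v^*\|_2 \leq \|A\|_{op}$ and (after transposing) $\|A^T u\|_2 \leq \|A\|_{op}$. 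This yields $\|A\|_{op} \leq u^T A v + 2\delta \|A\|_{op}$, and rearranging (using $\delta<1/2$) gives the claim.

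For the cardinality bound, I would invoke the standard volumetric $\varepsilon$-net construction: $\mathbb{S}^{K-1}$ admits a $\delta$-net of cardinality at most $(1+2/\delta)^K$, proved by taking a maximal $\delta$-separated set and comparing Euclidean volumes of concentric shells. The dominant factor $\delta^{-K}$ comes from this bound. The extra $K^{3/2}\log(1+K)$ factor is not produced by the raw volumetric argument; it arises from the explicit grid-based construction in the proof of Lemma 2 of \cite{Cai:12b}, where each coordinate is rounded onto a finite grid before projecting back to the sphere, and the polylogarithmic overhead tracks the grid resolution and coordinate-wise discretization error. Since the authors explicitly state that this lemma is assembled from parts of \cite{Cai:12b}, I would simply cite that construction rather than reprove it.

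The main obstacle, if any, is purely organizational rather than technical: there is no hard analytic step here. The only subtlety worth flagging is that because $A$ is not assumed symmetric, one must approximate the left and right test vectors independently, producing the $(1-2\delta)^{-1}$ factor; for symmetric $A$ a single-vector net argument on $u^T A u$ would give the sharper $(1-2\delta)^{-1}$ replaced by $(1-\delta)^{-1}$, but that improvement is unavailable here. Both halves of the lemma are textbook applications of covering/packing arguments, and the proof should be short.
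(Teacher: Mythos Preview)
Your proposal is correct and matches the paper's treatment: the paper gives no proof at all, merely stating that the lemma is assembled from parts of the proof of Lemma~2 in \cite{Cai:12b}, so your explicit net argument for the first inequality together with the citation for the cardinality bound is, if anything, more complete than what appears in the paper.

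One small inaccuracy in your aside: for symmetric $A$ the single-vector argument on $u^T A u$ does \emph{not} automatically yield $(1-\delta)^{-1}$. Writing $u^{*T}Au^* - u^T A u = (u^*-u)^T A u^* + u^T A(u^*-u)$ still produces two error terms each of size $\delta\|A\|_{op}$, so the standard bound remains $(1-2\delta)^{-1}$; sharper constants require a different trick (e.g.\ polarization or a separate bound on $\|Au\|_2$ versus $\|Au^*\|_2$). This does not affect the lemma or your main argument.
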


\section*{Acknowledgement}
We thank Jacob Bien for kindly providing the code for the block thresholding estimator, many helpful suggestions and editing the paper.

\bibliographystyle{plain}
\bibliography{banding.bib}

\end{document}